\theoremstyle{plain}
\newtheorem{theorem}{Theorem}
\theoremstyle{definition}
\theoremstyle{remark}
\newtheorem{example}[theorem]{Example}
\providecommand{\abs}[1]{\lvert#1\rvert}
\providecommand{\Abs}[1]{\Bigl\lvert#1\Bigr\rvert}
\begin{document}

\title[Asymptotics of predictive distributions]{Weak convergence of predictive distributions}
\author{Fabrizio Leisen}
\address{Fabrizio Leisen, Department of Mathematics, king’s College, Strand WC2R 2LS, London, Uk}
\email{fabrizio.leisen@gmail.com}
\author{Luca Pratelli}
\address{Luca Pratelli, Accademia Navale, viale Italia 72, 57100 Livorno,
Italy} \email{luca{\_}pratelli@marina.difesa.it}
\author{Pietro Rigo}
\address{Pietro Rigo (corresponding author), Dipartimento di Scienze Statistiche ``P. Fortunati'', Universit\`a di Bologna, via delle Belle Arti 41, 40126 Bologna, Italy}
\email{pietro.rigo@unibo.it}
\keywords{Asymptotic exchangeability, Conditional identity in distribution, Predictive distribution, Random probability measure, Stable convergence}
\subjclass[2020]{60B10, 60G57, 60G09, 60F99}

\begin{abstract}
Let $(X_n)$ be a sequence of random variables with values in a standard Borel space $S$. We investigate the condition
\begin{gather}\label{x56w1q}
E\bigl\{f(X_{n+1})\mid X_1,\ldots,X_n\bigr\}\,\quad\text{converges in probability,}\tag{*}
\\\text{as }n\rightarrow\infty,\text{ for each bounded Borel function }f:S\rightarrow\mathbb{R}.\notag
\end{gather}
Some consequences of \eqref{x56w1q} are highlighted and various sufficient conditions for it are obtained. In particular, \eqref{x56w1q} is characterized in terms of stable convergence. Since \eqref{x56w1q} holds whenever $(X_n)$ is conditionally identically distributed, three weak versions of the latter condition are investigated as well. For each of such versions, our main goal is proving (or disproving) that \eqref{x56w1q} holds. Several counterexamples are given.
\end{abstract}

\maketitle

\section{introduction}\label{intro}

Throughout, $(S,\mathcal{B})$ is a standard Borel space, that is, $S$ is a Borel subset of some Polish space and $\mathcal{B}$ the Borel $\sigma$-field on $S$. We denote by $M_b(S)$ the set of real bounded $\mathcal{B}$-measurable functions on $S$. Moreover, $(\Omega,\mathcal{A},P)$ is a probability space,
$$X=(X_1,X_2,\ldots)$$
a sequence of $S$-valued random variables on $(\Omega,\mathcal{A},P)$, and
$$\mathcal{F}_0=\bigl\{\emptyset,\Omega\bigr\}\quad\text{and}\quad\mathcal{F}_n=\sigma(X_1,\ldots,X_n).$$

A {\em random probability measure} (r.p.m.) on $\mathcal{B}$ is a map $\alpha$ on $\Omega\times\mathcal{B}$ such that:

$-$ $\alpha(\omega,\cdot)$ is a probability measure on $\mathcal{B}$ for fixed $\omega\in\Omega$;

$-$ the function $\omega\mapsto\alpha(\omega,B)$ is $\mathcal{A}$-measurable for fixed $B\in\mathcal{B}$.

\noindent If $f\in M_b(S)$, we denote by $\alpha(f)$ the real random variable $\omega\mapsto\alpha(\omega,f)$ where
$$\alpha(\omega,f)=\int f(x)\,\alpha(\omega,dx).$$
However, we write $\alpha(B)$ instead of $\alpha(1_B)$ in case $f=1_B$ with $B\in\mathcal{B}$.

For $n\ge 0$, define
$$\alpha_n(\cdot)=P(X_{n+1}\in\cdot\mid\mathcal{F}_n).$$
Such $\alpha_n$ is a r.p.m. on $\mathcal{B}$ such that
$$\alpha_n(f)=E\bigl\{f(X_{n+1})\mid\mathcal{F}_n\bigr\}\quad\quad\text{a.s. for each }f\in M_b(S).$$
The $\alpha_n$ are usually called the {\em predictive distributions} of $X$.

In various frameworks, including Bayesian inference, empirical processes and species sampling, a (natural) question is whether $\alpha_n$ converges, in some sense, as $n\rightarrow\infty$; see e.g. \cite{BPREJP,BPR13,BERN21,DV,FHW23,FY,HMW,MF,MOW,PIT} and references therein. Thus, in this paper, we investigate the condition
\begin{gather}\label{c1}
\alpha_n(f)\text{ converges in probability, as }n\rightarrow\infty,\text{ for each }f\in M_b(S).
\end{gather}
Indeed, in addition to its possible theoretical interest, condition \eqref{c1} has some useful consequences; see Remarks (i)-(v) below.

A popular sufficient condition for \eqref{c1} is that $X$ is {\em conditionally identically distributed} (c.i.d.), in the sense that
\begin{gather*}
(X_1,\ldots,X_n,X_k)\sim (X_1,\ldots,X_n,X_{n+1})\quad\quad\text{for all }k>n\ge 0.
\end{gather*}
In fact, as shown in \cite{BPRCID}, if $X$ is c.i.d. then
\begin{gather}\label{c2}
\alpha_n(f)\text{ converges a.s., as }n\rightarrow\infty,\text{ for each }f\in M_b(S).\tag{1*}
\end{gather}
Condition \eqref{c2} is strictly stronger than \eqref{c1}; see Example \ref{v778m4w}. Anyway, since
$$X\text{ c.i.d.}\quad\Rightarrow\quad\eqref{c2}\quad\Rightarrow\quad\eqref{c1},$$
we also investigate some weak versions of the c.i.d. condition. For each of such versions, our main goal is proving (or disproving) that \eqref{c1} or \eqref{c2} hold. However, apart from \eqref{c1} and \eqref{c2}, these versions could be potentially interesting in themselves.

Our main results are a characterization of \eqref{c1} in terms of stable convergence and various sufficient (but not necessary) conditions. Importantly, we also give several examples highlighting the connections between \eqref{c1} and other related conditions.

Finally, to motivate condition \eqref{c1}, we list some of its consequences.

\medskip

\textbf{(i)} Under \eqref{c1}, the limit of $\alpha_n(f)$ can be written as $\alpha(f)$ for a single r.p.m. $\alpha$. In fact, by Theorem \ref{bgy7m} below, since $(S,\mathcal{B})$ is standard Borel, condition \eqref{c1} yields
\begin{gather*}
\text{There is a r.p.m. }\alpha\text{ on }\mathcal{B}\text{ such that }
\alpha_n(f)\overset{P}\longrightarrow\alpha(f)\,\text{ for each }f\in M_b(S).\notag
\end{gather*}

\medskip

\textbf{(ii)} Let $\beta_n$ and $\beta$ be r.p.m.'s on $\mathcal{B}$. Write $\beta_n\overset{P}\longrightarrow\beta$ to mean that, for each subsequence $(n_j)$, there is a sub-subsequence $(n_{j_k})$ of $(n_j)$ such that
$$\beta_{n_{j_k}}(\omega,\cdot)\overset{weakly}\longrightarrow\beta(\omega,\cdot),\quad\text{as }k\rightarrow\infty,\text{ for almost all }\omega\in\Omega.$$
Equivalently, $\beta_n\overset{P}\longrightarrow\beta$ if and only if $d(\beta_n,\beta)\overset{P}\longrightarrow 0$ where $d$ is any distance metrizing weak convergence of probability measures (such as the Prohorov distance or the bounded Lipschitz metric). Then, by \cite[Cor. 2.4]{BPRSTOC}, condition \eqref{c1} yields $\alpha_n\overset{P}\longrightarrow\alpha$ for some r.p.m. $\alpha$. Incidentally, this explains the title of this paper.

\medskip

\textbf{(iii)} Let $\mu_n$ be the {\em empirical measure}, i.e., $\mu_n=(1/n)\,\sum_{i=1}^n\delta_{X_i}$ where $\delta_x$ denotes the unit mass at the point $x\in S$. Say that $X$ {\em satisfies the weak law of large numbers} if $\mu_n(f)=(1/n)\,\sum_{i=1}^{n}f(X_i)$ converges in probability for each $f\in M_b(S)$. By a (well known) martingale argument, one obtains
$$\mu_n(f)-\frac{1}{n}\,\sum_{i=1}^{n}\alpha_{i-1}(f)=\frac{1}{n}\,\sum_{i=1}^{n}\Bigl\{f(X_i)-\alpha_{i-1}(f)\Bigr\}\overset{a.s.}\longrightarrow 0\quad\text{for each }f\in M_b(S).$$
Hence, under condition \eqref{c1}, $X$ satisfies the weak law of large numbers. Precisely, $\mu_n(f)\overset{P}\longrightarrow\alpha(f)$ for all $f\in M_b(S)$ where $\alpha$ is the r.p.m. involved in Remark (i).

\medskip

\textbf{(iv)} By Remark (iii), condition \eqref{c1} implies $\mu_n(f)-\alpha_n(f)\overset{P}\longrightarrow 0$ for each $f\in M_b(S)$. Hence, for large $n$, the empirical measure $\mu_n$ is a reasonable approximation of the predictive $\alpha_n$. Using a statistical language, under condition \eqref{c1}, $\mu_n$ is a consistent estimate of $\alpha_n$.

\medskip

\textbf{(v)} Condition \eqref{c1} implies that $X$ is {\em asymptotically exchangeable}, that is
$$(X_{n+1},X_{n+2},\ldots)\longrightarrow (Z_1,Z_2,\ldots)\quad\text{in distribution, as }n\rightarrow\infty,$$
where $(Z_1,Z_2,\ldots)$ is an exchangeable sequence. This is proved in forthcoming Theorem \ref{v67j9x2}, which slightly improves a result by Aldous; see Lemma (8.2) of \cite{ALDOUS}.

\section{An analysis of condition \eqref{c1}}\label{x45g7n}
In the sequel, $C_b(S)$ denotes the set of real bounded continuous functions on $S$. We also recall that $(S,\mathcal{B})$ is a standard Borel space. This assumption plays a role quite often, for instance in Theorem \ref{bgy7m} below.

We begin with Remark (i). Given a sub-$\sigma$-field $\mathcal{G}\subset\mathcal{A}$, a r.p.m. $\alpha$ is said to be $\mathcal{G}$-measurable if the function $\omega\mapsto\alpha(\omega,B)$ is $\mathcal{G}$-measurable for fixed $B\in\mathcal{B}$.

\begin{theorem}\label{bgy7m}
Let $\mathcal{T}=\bigcap_n\sigma(X_n,X_{n+1},\ldots)$ be the tail $\sigma$-field of $X$. If condition \eqref{c1} holds, there is a r.p.m. $\alpha$ on $\mathcal{B}$ such that $\alpha$ is $\mathcal{T}$-measurable and $\alpha_n(f)\overset{P}\longrightarrow\alpha(f)$ for each $f\in M_b(S)$.
\end{theorem}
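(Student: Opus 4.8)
The plan is to extract the limiting functional, realise it as one measure, and then check tail measurability. For $f\in M_b(S)$ let $L(f)$ denote the in-probability limit of $\alpha_n(f)$ provided by \eqref{c1}. Passing to the limit along a.s.-convergent subsequences in the corresponding properties of $\alpha_n(f)=E\{f(X_{n+1})\mid\mathcal F_n\}$, one checks that $L$ is linear (for fixed $f,g$ and reals $a,b$, $L(af+bg)=aL(f)+bL(g)$ a.s.), monotone ($g\le h\Rightarrow L(g)\le L(h)$ a.s.), normalized ($L(1)=1$ a.s.) and contractive ($\abs{L(f)}\le\norm{f}_\infty$ a.s.). The real task is to glue the family $\{L(f)\}$ into a single measure-valued map $\alpha$.

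Next I would pass to the laws $\nu_n=P(X_{n+1}\in\cdot)$. Since $\alpha_n(f)$ is bounded and converges in probability, $\nu_n(f)=E\{\alpha_n(f)\}\to E\{L(f)\}$ for every $f\in M_b(S)$, so $\nu_n(B)\to\nu(B):=E\{L(1_B)\}$ for every $B\in\mathcal B$. This setwise convergence on the whole $\sigma$-field is the crucial input: a setwise limit of probability measures need not be countably additive (mass may escape), but since $\nu_n(B)$ converges for \emph{every} $B$, the Nikodym convergence theorem (a form of Vitali--Hahn--Saks) guarantees that $\nu$ is a genuine probability measure and that $\{\nu_n\}$ is uniformly countably additive. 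I expect this step---ruling out the escape of mass, so that $B\mapsto L(1_B)$ is a.s. countably additive---to be the main obstacle, since nothing in \eqref{c1} a priori prevents the limiting set function from being merely finitely additive.

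With this in hand I would build $\alpha$ on a countable field $\mathcal A_0$ generating $\mathcal B$ (available because $(S,\mathcal B)$ is standard Borel). Fixing versions $\ell_A$ of $L(1_A)$ for $A\in\mathcal A_0$, finite additivity and normalization hold on a single a.s. set, and so does countable additivity: for $A_m\downarrow\emptyset$ in $\mathcal A_0$ one has $\ell_{A_m}\downarrow R\ge0$ with $E\{R\}=\lim_m\nu(A_m)=0$ because $\nu$ is a measure, whence $R=0$ a.s. Thus $\omega\mapsto(\ell_A(\omega))_{A\in\mathcal A_0}$ is a.s. a countably additive probability on $\mathcal A_0$, which extends by Carath\'eodory to a probability $\alpha(\omega,\cdot)$ on $\mathcal B$ (put $\alpha(\omega,\cdot)$ equal to a fixed law off the good set); measurability of $\omega\mapsto\alpha(\omega,B)$ is inherited through the extension, so $\alpha$ is a r.p.m. with $\alpha(1_A)=L(1_A)$ a.s. for $A\in\mathcal A_0$.

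To pass from $\mathcal A_0$ to all of $\mathcal B$ I would avoid a limit interchange and argue by a $\pi$--$\lambda$ scheme. The class $\mathcal D=\{B\in\mathcal B:\alpha(1_B)=L(1_B)\text{ a.s.}\}$ contains $\mathcal A_0$, is stable under complements and proper differences by linearity of $L$, and is stable under increasing limits: if $B_k\uparrow B$ with $B_k\in\mathcal D$, then $\alpha(1_B)=\lim_k\alpha(1_{B_k})=\lim_k L(1_{B_k})\le L(1_B)$ a.s. by monotonicity, while $E\{\alpha(1_B)\}=\nu(B)=E\{L(1_B)\}$ (the probability measures $B\mapsto E\{\alpha(1_B)\}$ and $\nu$ agree on the generating field $\mathcal A_0$), and a nonnegative difference of zero mean vanishes. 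Dynkin's theorem gives $\mathcal D=\mathcal B$, and uniform approximation by simple functions upgrades $\alpha(f)=L(f)$ a.s. to all $f\in M_b(S)$. Finally, for tail measurability I would use the martingale identity of Remark (iii): under \eqref{c1} one has $\frac1n\sum_{i=m}^n f(X_i)\to L(f)$ in $L^1$, and the left side is $\sigma(X_m,X_{m+1},\ldots)$-measurable, so $L(f)$ admits a version measurable with respect to $\bigcap_m\sigma(X_m,\ldots)=\mathcal T$. Rebuilding $\alpha$ from $\mathcal T$-measurable versions $\ell_A$ leaves the a.s. values unchanged, hence all the above, while making $\omega\mapsto\alpha(\omega,B)$ $\mathcal T$-measurable for every $B$, which completes the proof.
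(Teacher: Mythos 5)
Your overall architecture is sound and in fact close in spirit to the paper's: build the limiting r.p.m.\ from a generating class, then propagate the a.s.\ identity $\alpha(f)=L(f)$ to all of $M_b(S)$ via a one-sided inequality plus equality of expectations, with Vitali--Hahn--Saks supplying the limiting mean measure (the paper does this with a monotone class of functions, you with a Dynkin class of sets; your tail-measurability argument via Ces\`aro tails is also fine). However, there is a genuine gap at precisely the step you flag as the main obstacle: the claim that countable additivity of $(\ell_A)_{A\in\mathcal{A}_0}$ ``holds on a single a.s.\ set''. Your argument fixes \emph{one} sequence $A_m\downarrow\emptyset$ in $\mathcal{A}_0$ and produces a null set off which $\ell_{A_m}\downarrow 0$; but that null set depends on the sequence, and the countable field $\mathcal{A}_0$ has \emph{uncountably} many decreasing sequences with empty intersection. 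Nothing in your argument aggregates these uncountably many a.s.\ statements into one. The Nikodym/Vitali--Hahn--Saks theorem gives countable additivity of the mean measure $\nu(B)=E\{L(1_B)\}$, which is necessary but not sufficient for pathwise countable additivity of the random set function: mass can escape for almost every $\omega$ while no escape is visible in the mean.

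This is not a detail fixable by routine care. If the argument were valid as written, it would apply verbatim to $\ell_A=P(X\in A\mid\mathcal{G})$, $A$ in a countable field, on an \emph{arbitrary} countably generated measurable space (such families are a.s.\ finitely additive, $[0,1]$-valued, normalized, with countably additive mean), and would therefore prove the existence of regular conditional distributions on every countably generated space --- which is false outside perfect/standard spaces. The standard Borel hypothesis must enter exactly here, through a compactness argument: for instance, enlarge $\mathcal{A}_0$ to contain a countable compact class $\mathcal{K}$ inner-approximating $\nu$, prove $\ell_A=\sup\{\ell_K:\,K\in\mathcal{K},\,K\subset A\}$ a.s.\ simultaneously for all $A\in\mathcal{A}_0$ (countably many conditions, each by your mean argument), and then apply the compact-class criterion pointwise in $\omega$; or embed $S$ in a compact metric space and use Riesz representation on a countable dense family of continuous functions. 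The paper sidesteps all of this by invoking Corollary 2.4 of \cite{BPRSTOC}, which is precisely the statement that, on a standard Borel space, convergence in probability of $\alpha_n(f)$ for $f\in C_b(S)$ yields a limiting r.p.m.; that citation does the compactness work your Carath\'eodory step is missing. Once you repair that step (or quote such a result), the remainder of your proof goes through.
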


\begin{proof}
By Corollary 2.4 of \cite{BPRSTOC}, since $(S,\mathcal{B})$ is standard Borel and $\alpha_n(f)$ converges in probability whenever $f\in C_b(S)$, there is a r.p.m. $\alpha$ on $\mathcal{B}$ such that $\alpha_n(f)\overset{P}\longrightarrow\alpha(f)$ for each $f\in C_b(S)$. By Remark (iii), one also obtains $\mu_n(f)\overset{P}\longrightarrow\alpha(f)$ for each $f\in C_b(S)$, where $\mu_n=(1/n)\,\sum_{i=1}^n\delta_{X_i}$ is the empirical measure. Hence, along a suitable subsequence $(n_j)$, one obtains
$$\mu_{n_j}(\omega,\cdot)\overset{weakly}\longrightarrow\alpha(\omega,\cdot),\text{ as }j\rightarrow\infty,\text{ for almost all }\omega\in\Omega.$$
Therefore, $\alpha$ can be taken to be $\mathcal{T}$-measurable. Next, for each $f\in M_b(S)$, fix a random variable $\gamma(f)$ satisfying $\alpha_n(f)\overset{P}\longrightarrow\gamma(f)$ and define
$$L=\bigl\{f\in M_b(S):\alpha(f)=\gamma(f)\text{ a.s.}\bigr\}.$$
Then, $L\supset C_b(S)$ and $L$ is a linear space including the constants. Hence, it suffices to show that
\begin{gather}\label{moncond}
f\in L\text{ provided }f\in M_b(S)\text{ and }f_k\rightarrow f\text{ pointwise}
\\\text{for some monotone sequence }(f_k)\subset L.\notag
\end{gather}
In fact, if $L$ satisfies \eqref{moncond}, the monotone class theorem yields $L=M_b(S)$.

We next prove \eqref{moncond}. Fix $f\in M_b(S)$ and a monotone sequence $(f_k)\subset L$ such that $f_k\rightarrow f$ pointwise. Since $L$ is a linear space, $(f_k)$ can be assumed to be increasing, i.e. $f_1\le f_2\le\ldots$ In this case,
$$\alpha(f)=\sup_k\alpha(f_k)=\sup_k\gamma(f_k)\le\gamma(f)\quad\quad\text{a.s.}$$
Next, condition \eqref{c1} implies that $\lim_nE\bigl\{\alpha_n(B)\bigr\}$ exists for every $B\in\mathcal{B}$. Define
$$\lambda(B)=\lim_nE\bigl\{\alpha_n(B)\bigr\}\quad\text{and}\quad\lambda^*(B)=E\bigl\{\alpha(B)\bigr\}\quad\text{for all }B\in\mathcal{B}.$$
By the Vitali-Hahn-Saks theorem, $\lambda$ is a probability measure on $\mathcal{B}$. Moreover,
$$\lambda(g)=\lim_nE\bigl\{\alpha_n(g)\bigr\}=E\bigl\{\gamma(g)\bigr\}=E\bigl\{\alpha(g)\bigr\}=\lambda^*(g)\quad\text{for all }g\in C_b(S).$$
Therefore, $\lambda=\lambda^*$ and this in turn implies
\begin{gather*}
E\bigl\{\gamma(f)-\alpha(f)\bigr\}=E\bigl\{\gamma(f)\bigr\}-E\bigl\{\alpha(f)\bigr\}
\\\le\lim_n E\bigl\{\alpha_n(f)\bigr\}-\lambda(f)=\lambda(f)-\lambda(f)=0,
\end{gather*}
where the inequality is due to Fatou's lemma. Since $\gamma(f)-\alpha(f)\ge 0$ a.s., it follows that $\alpha(f)=\gamma(f)$ a.s. Hence, $f\in L$ and this concludes the proof.
\end{proof}

An obvious weakening of condition \eqref{c1} is
\begin{gather}\label{c55b9m}
\alpha_n(f)\text{ converges in probability, as }n\rightarrow\infty,\text{ for each }f\in C_b(S).
\end{gather}
Condition \eqref{c55b9m} arises quite frequently. For instance, by \cite[Cor. 2.4]{BPRSTOC}, condition \eqref{c55b9m} amounts to $\alpha_n\overset{P}\longrightarrow\alpha$ for some r.p.m. $\alpha$; see Remark (ii). Or else, by the same argument of Theorem \ref{bgy7m}, it can be shown that

$$\eqref{c1}\quad\Longleftrightarrow\quad\eqref{c55b9m}\text{ and }\eqref{x55hgf6j}$$
where condition \eqref{x55hgf6j} is
\begin{gather}\label{x55hgf6j}
\lim_nP(X_n\in B)\,\text{ exists for every }B\in\mathcal{B}.
\end{gather}
Note that condition \eqref{x55hgf6j} is trivially true if the sequence $X$ is identically distributed. Finally, condition \eqref{c55b9m} implies asymptotic exchangeability, as defined in Remark (v). The next result slightly improves Lemma (8.2) of Aldous \cite{ALDOUS}.

\begin{theorem}\label{v67j9x2}
Condition \eqref{c55b9m} implies asymptotic exchangeability of $X$.
\end{theorem}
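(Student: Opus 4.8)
The plan is to exhibit an explicit exchangeable limit and to reduce asymptotic exchangeability to the convergence of finitely many mixed moments. By Remark (ii) (that is, \cite[Cor. 2.4]{BPRSTOC}), condition \eqref{c55b9m} provides a r.p.m. $\alpha$ on $\mathcal{B}$ with $\alpha_n(f)\overset{P}\longrightarrow\alpha(f)$ for every $f\in C_b(S)$. Let $(Z_1,Z_2,\ldots)$ be the exchangeable sequence directed by $\alpha$ in the sense of de Finetti, so that
$$E\Bigl\{\prod_{i=1}^m f_i(Z_i)\Bigr\}=E\Bigl\{\prod_{i=1}^m\alpha(f_i)\Bigr\}\quad\text{for all }m\ge 1\text{ and }f_1,\ldots,f_m\in C_b(S);$$
such a sequence exists because $S$ is standard Borel (take the $Z_i$ conditionally i.i.d. given $\alpha$, with common conditional law $\alpha$). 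Since convergence in distribution on $S^\infty$ (with the product topology) is equivalent to convergence of all finite-dimensional distributions, it then suffices to prove that, for each $m$ and all $f_1,\ldots,f_m\in C_b(S)$,
$$E\Bigl\{\prod_{i=1}^m f_i(X_{n+i})\Bigr\}\longrightarrow E\Bigl\{\prod_{i=1}^m\alpha(f_i)\Bigr\}.$$

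The heart of the argument is the claim $S_m$: for every uniformly bounded sequence $(W_n)$ with $W_n$ being $\mathcal{F}_n$-measurable, and all $f_1,\ldots,f_m\in C_b(S)$,
$$E\Bigl\{W_n\prod_{i=1}^m f_i(X_{n+i})\Bigr\}-E\Bigl\{W_n\prod_{i=1}^m\alpha_n(f_i)\Bigr\}\longrightarrow 0.$$
I would prove $S_m$ by induction on $m$, with $S_0$ trivial. For the inductive step, I would first condition on $\mathcal{F}_{n+m-1}$; since $W_n\prod_{i<m}f_i(X_{n+i})$ is $\mathcal{F}_{n+m-1}$-measurable and $E\{f_m(X_{n+m})\mid\mathcal{F}_{n+m-1}\}=\alpha_{n+m-1}(f_m)$, the left-most expectation equals $E\{W_n\prod_{i<m}f_i(X_{n+i})\,\alpha_{n+m-1}(f_m)\}$. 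I would then replace $\alpha_{n+m-1}(f_m)$ by $\alpha_n(f_m)$: the incurred error is $E\{W_n\prod_{i<m}f_i(X_{n+i})(\alpha_{n+m-1}(f_m)-\alpha_n(f_m))\}$, which tends to $0$ because the first factor is uniformly bounded while $\alpha_{n+m-1}(f_m)-\alpha_n(f_m)\overset{P}\longrightarrow 0$ (both predictives converge to $\alpha(f_m)$). After this replacement the weight $W_n':=W_n\,\alpha_n(f_m)$ is again $\mathcal{F}_n$-measurable and uniformly bounded, so the inductive hypothesis $S_{m-1}$ applied to $(W_n')$ and $f_1,\ldots,f_{m-1}$ closes the induction.

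Taking $W_n\equiv 1$ in $S_m$ gives $E\{\prod_{i=1}^m f_i(X_{n+i})\}=E\{\prod_{i=1}^m\alpha_n(f_i)\}+o(1)$. Since $\alpha_n(f_i)\overset{P}\longrightarrow\alpha(f_i)$ and all factors are bounded, $\prod_{i=1}^m\alpha_n(f_i)$ converges in probability to $\prod_{i=1}^m\alpha(f_i)$, whence $E\{\prod_{i=1}^m\alpha_n(f_i)\}\to E\{\prod_{i=1}^m\alpha(f_i)\}$ by bounded convergence; this yields the required moment convergence. Finally, to upgrade it to weak convergence of $(X_{n+1},\ldots,X_{n+m})$, I would note that each one-dimensional marginal converges weakly (to the probability measure $B\mapsto E\{\alpha(B)\}$) and is therefore tight; coordinatewise tightness gives tightness of the joint laws on $S^m$, and along any weakly convergent subsequence the limit is pinned down uniquely as the law of $(Z_1,\ldots,Z_m)$ because the products above form a measure-determining class. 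I expect the main obstacle to be precisely the inductive step: a priori $\alpha_{n+m-1}(f_m)$ is not $\mathcal{F}_n$-measurable and so blocks the recursion, and the decisive point is that replacing it by $\alpha_n(f_m)$ — legitimate only because the two predictives share the same limit in probability — restores measurability and lets the predictive factor be absorbed into the weight.
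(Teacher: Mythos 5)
Your proof is correct, and while it shares the paper's overall skeleton --- use \cite[Cor.~2.4]{BPRSTOC} to produce $\alpha$, reduce asymptotic exchangeability to convergence of the mixed moments $E\bigl\{\prod_{i=1}^m f_i(X_{n+i})\bigr\}$, and compute these by conditioning on $\mathcal{F}_{n+m-1}$ and exploiting $L^1$-convergence of the predictives --- the way you close the induction is genuinely different. The paper first proves $X_n\rightarrow\alpha$ stably and then, at each inductive step, replaces $\alpha_{n+k-1}(f_k)$ by the limit variable $\alpha(f_k)$, invoking stable convergence to evaluate terms such as $E\bigl\{f(X_{n+1})\,\alpha(g)\bigr\}$. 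You instead replace $\alpha_{n+m-1}(f_m)$ by $\alpha_n(f_m)$ --- legitimate since both converge in probability, hence (being bounded) in $L^1$, to $\alpha(f_m)$ --- so that the absorbed factor stays $\mathcal{F}_n$-measurable and the conditioning of the next round goes through. This buys you two things: you never need stable convergence at all, and you automatically dodge a subtlety that the paper's ``proceeding in this way, by induction'' glosses over, namely that after the first step the paper's computation carries the non-adapted factor $\alpha(f_k)$ inside the expectation, and that factor blocks the conditioning on $\mathcal{F}_{n+k-1}$ at the following step (repairing this requires, e.g., a martingale approximation of the test variable together with $\mathcal{F}_\infty$-measurability of $\alpha$, or else a weighted induction of exactly your kind). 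What the paper's route buys in exchange is stable convergence of $X_n$ as a by-product, a fact it exploits elsewhere (Theorem \ref{f6yyh8}). Your final upgrade from moment convergence to convergence in distribution is also spelled out more fully than the paper's one-line assertion; the only caveat there is that ``weakly convergent implies tight'' (Le Cam's theorem) is a Polish-space fact, whereas $S$ is merely standard Borel, so the tightness/Prohorov step should be run in an ambient Polish space $\hat{S}\supset S$ and then relativized to $S^m$ using that the candidate limit law is itself carried by $S^m$; since the paper's concluding sentence needs the same remark, this is a refinement of your argument, not a gap in it.
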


The proof of Theorem \ref{v67j9x2} is postponed to the end of this section. Here, we note that Theorem \ref{v67j9x2} can not be inverted, i.e., asymptotic exchangeability does not imply condition \eqref{c55b9m}; see Example \ref{w34b7j}. A further remark is that, since exchangeability amounts to stationarity and asymptotic exchangeability, Theorem \ref{v67j9x2} yields
$$X\text{ exchangeable}\quad\Leftrightarrow\quad X\text{ stationary and condition \eqref{c55b9m} holds.}$$

We now give some characterizations of condition \eqref{c1}. Recall that $X_n$ {\em converges stably to} $\alpha$, where $\alpha$ is a r.p.m. on $\mathcal{B}$, if
$$P(X_n\in\cdot\mid H)\overset{weakly}\longrightarrow E\bigl\{\alpha(\cdot)\mid H\bigr\}\quad\quad\text{for all }H\in\mathcal{A}^+$$
where $\mathcal{A}^+=\bigl\{A\in\mathcal{A}:P(A)>0\bigr\}$. Equivalently,
$$E\bigl\{\alpha(f)\mid H\bigr\}=\lim_nE\bigl\{f(X_n)\mid H\bigr\}\quad\quad\text{for all }f\in C_b(S)\text{ and }H\in\mathcal{A}^+.$$

\begin{theorem}\label{f6yyh8}
Condition \eqref{c1} holds if and only if condition \eqref{x55hgf6j} holds, $X_n$ converges stably to $\alpha$, for some r.p.m. $\alpha$ on $\mathcal{B}$, and
\begin{gather}\label{mju889c4}
E\bigl\{\alpha(f)^2\bigr\}=\lim_nE\bigl\{\alpha_n(f)^2\bigr\}\quad\quad\text{for all }f\in C_b(S).
\end{gather}
\end{theorem}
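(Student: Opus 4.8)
The plan is to exploit the equivalence $\eqref{c1}\Leftrightarrow\eqref{c55b9m}$ and $\eqref{x55hgf6j}$ recorded just above, so that everything reduces to controlling $\alpha_n(f)$ for $f\in C_b(S)$. I would first rewrite stable convergence in a computationally convenient form: since $P(X_n\in\cdot\mid H)\overset{weakly}\longrightarrow E\{\alpha(\cdot)\mid H\}$ for every $H\in\mathcal{A}^+$ is the same as $E\{f(X_n)\,1_H\}\to E\{\alpha(f)\,1_H\}$ for all $H\in\mathcal{A}$ and $f\in C_b(S)$, a routine linearity-and-uniform-approximation argument (using $\abs{f(X_n)}\le\norm{f}_\infty$ and $\abs{\alpha(f)}\le\norm{f}_\infty$) upgrades this to
\begin{gather*}
E\bigl\{f(X_n)\,g\bigr\}\longrightarrow E\bigl\{\alpha(f)\,g\bigr\}\quad\text{for every bounded measurable }g\text{ and every }f\in C_b(S).
\end{gather*}
This is the identity I would use throughout, and here $\mathcal{F}_\infty=\sigma(X_1,X_2,\ldots)$.

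For the ``only if'' part, assume \eqref{c1}. Theorem \ref{bgy7m} supplies a $\mathcal{T}$-measurable r.p.m. $\alpha$ with $\alpha_n(f)\overset{P}\longrightarrow\alpha(f)$ for all $f\in M_b(S)$; in particular \eqref{x55hgf6j} holds because $P(X_n\in B)=E\{\alpha_{n-1}(B)\}\to E\{\alpha(B)\}$, and \eqref{mju889c4} follows from bounded convergence applied to $\alpha_n(f)^2\overset{P}\longrightarrow\alpha(f)^2$. To obtain stable convergence I would verify the displayed identity above. Since $f(X_n)$ and $\alpha(f)$ are both $\mathcal{F}_\infty$-measurable (the latter because $\mathcal{T}\subset\mathcal{F}_\infty$), I may replace $g$ by $E\{g\mid\mathcal{F}_\infty\}$ and so assume $g$ is $\mathcal{F}_\infty$-measurable; for $g$ that is $\mathcal{F}_m$-measurable and $n>m$ the tower property gives $E\{f(X_{n+1})\,g\}=E\{\alpha_n(f)\,g\}\to E\{\alpha(f)\,g\}$, and a density argument over $\bigcup_m\mathcal{F}_m$ completes this direction.

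For the ``if'' part I would show $\alpha_n(f)\to\alpha(f)$ in $L^2$ for each $f\in C_b(S)$, which yields \eqref{c55b9m} and hence \eqref{c1}. The first, and I expect the most delicate, step is to prove that the stable limit is $\mathcal{F}_\infty$-measurable: taking $g=\alpha(f)-E\{\alpha(f)\mid\mathcal{F}_\infty\}$ (a bounded function with $E\{g\mid\mathcal{F}_\infty\}=0$) in the displayed identity, and noting that $E\{f(X_n)\,g\}=E\{f(X_n)\,E\{g\mid\mathcal{F}_\infty\}\}=0$ for all $n$ because $f(X_n)$ is $\mathcal{F}_\infty$-measurable, forces $E\{\alpha(f)\,g\}=0$ and therefore $E\{g^2\}=0$, i.e. $\alpha(f)=E\{\alpha(f)\mid\mathcal{F}_\infty\}$ a.s. With this in hand I would compute the cross term. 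Writing $g_m=E\{\alpha(f)\mid\mathcal{F}_m\}$, martingale convergence gives $g_m\to\alpha(f)$ in $L^2$; for $n>m$ the tower property yields $E\{\alpha_n(f)\,g_m\}=E\{f(X_{n+1})\,g_m\}\to E\{\alpha(f)\,g_m\}=E\{g_m^2\}$, and since the remainder $E\{\alpha_n(f)(\alpha(f)-g_m)\}$ is bounded by $\norm{f}_\infty\,\norm{\alpha(f)-g_m}_{L^2}$ uniformly in $n$, letting first $n\to\infty$ and then $m\to\infty$ gives $E\{\alpha_n(f)\,\alpha(f)\}\to E\{\alpha(f)^2\}$. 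Combining this with \eqref{mju889c4},
\begin{gather*}
E\bigl\{(\alpha_n(f)-\alpha(f))^2\bigr\}=E\{\alpha_n(f)^2\}-2\,E\{\alpha_n(f)\,\alpha(f)\}+E\{\alpha(f)^2\}\longrightarrow 0,
\end{gather*}
as desired. The main obstacle is precisely this backward direction: identifying the stable limit as $\mathcal{F}_\infty$-measurable and then extracting the cross-term limit from stable convergence via the martingale approximation $g_m$. Once these are in place, \eqref{mju889c4} does exactly the job of matching the second moments so that the $L^2$ distance collapses to zero.
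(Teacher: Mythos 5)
Your proof is correct, and its skeleton matches the paper's: the forward direction uses Theorem \ref{bgy7m}, bounded convergence for \eqref{mju889c4}, and the tower identity $E\{1_H f(X_{n+1})\}=E\{1_H\,\alpha_n(f)\}$ for $H\in\mathcal{F}_k$, $n\ge k$ (the paper's \eqref{vg874n9i}); the backward direction ends with the same expansion of $E\{(\alpha_n(f)-\alpha(f))^2\}$ against \eqref{mju889c4}. The genuine difference lies in how the cross term $\lim_n E\{\alpha_n(f)\,\alpha(f)\}=E\{\alpha(f)^2\}$ is extracted from stable convergence. The paper first asserts $E\{\alpha(f)\mid H\}=\lim_n E\{\alpha_n(f)\mid H\}$ for \emph{all} $H\in\mathcal{A}^+$ (its equation \eqref{d56h8j}), then approximates $\alpha(f)$ \emph{uniformly} by $\mathcal{A}$-simple functions $U_k$ and interchanges the iterated limits $\lim_k\lim_n E\{U_k\,\alpha_n(f)\}$. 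You instead prove explicitly that the stable limit satisfies $\alpha(f)=E\{\alpha(f)\mid\mathcal{F}_\infty\}$ a.s.\ (by testing stable convergence against $g=\alpha(f)-E\{\alpha(f)\mid\mathcal{F}_\infty\}$) and then run an $L^2$ martingale approximation with $g_m=E\{\alpha(f)\mid\mathcal{F}_m\}$. The two devices do the same job, but your version is more explicit at the one delicate point: extending \eqref{d56h8j} from $H\in\bigcup_k\mathcal{F}_k$, where \eqref{vg874n9i} applies directly, to arbitrary $H\in\mathcal{A}^+$ is precisely equivalent to the a.s.\ $\mathcal{F}_\infty$-measurability of $\alpha(f)$, which the paper compresses into ``arguing as above'' --- even though the argument ``above'' (in the only-if direction) was licensed by the $\mathcal{T}$-measurability of $\alpha$ furnished by Theorem \ref{bgy7m}, which is not available for the converse. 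So your route isolates and proves the fact the paper uses implicitly, making the backward direction self-contained; what the paper's route buys in exchange is brevity, since once \eqref{d56h8j} is granted, the uniform approximation and limit interchange finish the cross-term computation in two lines. A final minor point in your favor: you get \eqref{x55hgf6j} directly from $P(X_n\in B)=E\{\alpha_{n-1}(B)\}\rightarrow E\{\alpha(B)\}$, whereas the paper routes it through the unproved equivalence \eqref{c1} $\Leftrightarrow$ \eqref{c55b9m}$+$\eqref{x55hgf6j}; both of you still need that equivalence to close the backward direction, which is legitimate since the paper records it beforehand.
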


\begin{proof}
First note that, if $H\in\mathcal{F}_k$ for some $k$, then
\begin{gather}\label{vg874n9i}
E\bigl\{1_H\,f(X_n)\bigr\}=E\bigl\{1_H\,E(f(X_n)\mid\mathcal{F}_{n-1})\bigr\}=E\bigl\{1_H\,\alpha_{n-1}(f)\bigr\}
\end{gather}
for all $f\in M_b(S)$ and $n>k$.

Assume condition \eqref{c1}. By Theorem \ref{bgy7m}, there is a r.p.m. $\alpha$ on $\mathcal{B}$ such that $\alpha$ is $\mathcal{T}$-measurable and $\alpha_n(f)\overset{P}\longrightarrow\alpha(f)$ for each $f\in M_b(S)$. Hence, condition \eqref{mju889c4} follows from the bounded convergence theorem. Next, fix $f\in C_b(S)$. If $H\in\bigcup_k\mathcal{F}_k$ and $P(H)>0$, one obtains
$$E\bigl\{\alpha(f)\mid H\bigr\}=\lim_nE\bigl\{\alpha_n(f)\mid H\bigr\}=\lim_nE\bigl\{f(X_n)\mid H\bigr\}$$
where the second equality depends on \eqref{vg874n9i}. Since $\alpha$ is $\mathcal{T}$-measurable and $\mathcal{T}\subset\sigma(X)$, by standard arguments, this implies
$$E\bigl\{\alpha(f)\mid H\bigr\}=\lim_nE\bigl\{f(X_n)\mid H\bigr\}\quad\quad\text{for all }H\in\mathcal{A}^+.$$
Hence, $X_n\rightarrow\alpha$ stably. Finally, condition \eqref{x55hgf6j} holds since \eqref{c1} $\Leftrightarrow$ \eqref{c55b9m}-\eqref{x55hgf6j}.

Conversely, assume conditions \eqref{x55hgf6j}-\eqref{mju889c4} and $X_n\rightarrow\alpha$ stably. Since \eqref{x55hgf6j} holds, to obtain \eqref{c1}, it suffices to prove condition \eqref{c55b9m}. Fix $f\in C_b(S)$. Since $X_n\rightarrow\alpha$ stably, using equation \eqref{vg874n9i} and arguing as above, one obtains
\begin{gather}\label{d56h8j}
E\bigl\{\alpha(f)\mid H\bigr\}=\lim_nE\bigl\{\alpha_n(f)\mid H\bigr\}\quad\quad\text{for all }H\in\mathcal{A}^+.
\end{gather}
Since $\alpha(f)$ is bounded and $\mathcal{A}$-measurable, $\alpha(f)$ is the uniform limit of a sequence $(U_k)$ of $\mathcal{A}$-simple functions. Since $U_k\rightarrow\alpha(f)$ uniformly, as $k\rightarrow\infty$, equation \eqref{d56h8j} implies
\begin{gather*}
E\bigl\{\alpha(f)^2\bigr\}=\lim_kE\bigl\{U_k\,\alpha(f)\bigr\}=\lim_k\lim_nE\bigl\{U_k\,\alpha_n(f)\bigr\}
\\=\lim_n\lim_kE\bigl\{U_k\,\alpha_n(f)\bigr\}=\lim_nE\bigl\{\alpha(f)\,\alpha_n(f)\bigr\}.
\end{gather*}
Hence, condition \eqref{mju889c4} yields
$$E\Bigl\{\bigl(\alpha_n(f)-\alpha(f)\bigr)^2\Bigr\}=E\bigl\{\alpha(f)^2\bigr\}+E\bigl\{\alpha_n(f)^2\bigr\}-2\,E\bigl\{\alpha_n(f)\,\alpha(f)\bigr\}\longrightarrow 0.$$
Therefore, $\alpha_n(f)\overset{P}\longrightarrow\alpha(f)$ and this concludes the proof.
\end{proof}

The next result deals with a special case. Recall that $\mu_n=(1/n)\,\sum_{i=1}^n\delta_{X_i}$ denotes the empirical measure.

\begin{theorem}\label{09ilmw3}
Suppose that, for each $f\in M_b(S)$,
\begin{gather}\label{x348uh6}
\mu_n(f)\quad\text{converges in probability to a degenerate random variable.}
\end{gather}
Then, condition \eqref{c1} is equivalent to condition \eqref{x55hgf6j} and
\begin{gather}\label{f56hj8m}
\lim_nE\bigl\{\alpha_n(f)^2\bigr\}=\lim_n E\bigl\{f(X_n)\bigr\}^2\quad\text{for each }f\in M_b(S).
\end{gather}
\end{theorem}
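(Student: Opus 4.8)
The plan is to prove both implications, using throughout the preliminary observation that \eqref{x55hgf6j} forces, via the Vitali-Hahn-Saks theorem, the existence of $m_f := \lim_n E\{f(X_n)\}$ for every $f\in M_b(S)$: condition \eqref{x55hgf6j} says the laws $P(X_n\in\cdot)$ converge setwise, and setwise convergence of probability measures entails convergence of $E\{f(X_n)\}$ for all bounded Borel $f$. I also record the standing hypothesis \eqref{x348uh6} in the form $\mu_n(f)\overset{P}\longrightarrow c_f$ for a \emph{constant} $c_f$, and the elementary identity $E\{\alpha_n(f)\}=E\{f(X_{n+1})\}$, so that $E\{\alpha_n(f)\}\to m_f$ whenever \eqref{x55hgf6j} holds.

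For the direct implication I would assume \eqref{c1}. Then \eqref{x55hgf6j} is immediate from the equivalence $\eqref{c1}\Leftrightarrow\eqref{c55b9m}$ and \eqref{x55hgf6j} recorded before Theorem \ref{v67j9x2}. By Theorem \ref{bgy7m} there is a r.p.m. $\alpha$ with $\alpha_n(f)\overset{P}\longrightarrow\alpha(f)$ for every $f\in M_b(S)$; combining this with Remark (iii) gives $\mu_n(f)\overset{P}\longrightarrow\alpha(f)$, and comparing with \eqref{x348uh6} forces $\alpha(f)=c_f$ a.s. Bounded convergence then yields $\lim_n E\{\alpha_n(f)^2\}=E\{\alpha(f)^2\}=c_f^2$ and $m_f=E\{\alpha(f)\}=c_f$, so that $\lim_n E\{\alpha_n(f)^2\}=c_f^2=\lim_n E\{f(X_n)\}^2$, which is exactly \eqref{f56hj8m}.

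For the converse I would assume \eqref{x55hgf6j} and \eqref{f56hj8m} and argue directly for every $f\in M_b(S)$. First I identify the constant: since $E\{\mu_n(f)\}=\frac1n\sum_{i=1}^n E\{f(X_i)\}$, the left side tends to $c_f$ by \eqref{x348uh6} (convergence in probability plus boundedness), while the Ces\`aro average on the right tends to $m_f$ because $E\{f(X_i)\}\to m_f$; hence $c_f=m_f=\lim_n E\{f(X_n)\}$. The heart of the argument is then the $L^2$ expansion $E\{(\alpha_n(f)-c_f)^2\}=E\{\alpha_n(f)^2\}-2\,c_f\,E\{\alpha_n(f)\}+c_f^2$. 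By \eqref{f56hj8m} the first term tends to $\lim_n E\{f(X_n)\}^2=c_f^2$, while the middle term tends to $-2c_f m_f=-2c_f^2$ since $E\{\alpha_n(f)\}=E\{f(X_{n+1})\}\to m_f=c_f$. Therefore $E\{(\alpha_n(f)-c_f)^2\}\to 0$, so $\alpha_n(f)\to c_f$ in $L^2$ and a fortiori in probability; this is \eqref{c1}.

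The step I expect to require the most care is the identification $c_f=\lim_n E\{f(X_n)\}$ in the converse, since it is precisely what couples the three hypotheses: \eqref{x348uh6} delivers only the Ces\`aro average of the marginal means, and one needs \eqref{x55hgf6j} (through Vitali-Hahn-Saks) both to upgrade this to genuine convergence of $E\{f(X_n)\}$ and to pin its value down to $c_f$. Once this is secured the $L^2$ expansion is routine. It is worth emphasising that, unlike in Theorems \ref{bgy7m} and \ref{f6yyh8}, no passage through $C_b(S)$ or the monotone class theorem is needed here, because \eqref{x348uh6} and \eqref{f56hj8m} are already postulated on all of $M_b(S)$, so the whole argument runs directly at the level of bounded Borel functions.
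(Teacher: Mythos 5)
Your proof is correct and follows essentially the same route as the paper: the direct implication via Theorem \ref{bgy7m}, Remark (iii), degeneracy of $\alpha(f)$, and bounded convergence, and the converse via the Vitali--Hahn--Saks theorem and the $L^2$ expansion of $E\bigl\{\bigl(\alpha_n(f)-\lambda(f)\bigr)^2\bigr\}$ with $\lambda(f)=\lim_n E\bigl\{f(X_n)\bigr\}$. The only difference is that your converse passes through the constant $c_f$ and the identification $c_f=m_f$ --- the step you flag as delicate --- whereas the paper expands directly around $\lambda(f)$ and never uses \eqref{x348uh6} in that direction, so that step is in fact dispensable.
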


\begin{proof}
Assume condition \eqref{c1}. Then, condition \eqref{x55hgf6j} holds. Moreover, as noted in Remark (iii), there is a r.p.m. $\alpha$ such that $\alpha_n(f)\overset{P}\longrightarrow\alpha(f)$ and $\mu_n(f)\overset{P}\longrightarrow\alpha(f)$ for all $f\in M_b(S)$. Fix $f\in M_b(S)$. Since $\mu_n(f)\overset{P}\longrightarrow\alpha(f)$, condition \eqref{x348uh6} implies that $\alpha(f)$ is a degenerate random variable, so that
$$\alpha(f)=E\bigl\{\alpha(f)\bigr\}=\lim_nE\bigl\{\alpha_{n-1}(f)\bigr\}=\lim_nE\bigl\{f(X_n)\bigr\}\quad\quad\text{a.s.}$$
Therefore, Theorem \ref{f6yyh8} yields $\lim_nE\bigl\{f(X_n)\bigr\}^2=E\bigl\{\alpha(f)^2\bigr\}=\lim_nE\bigl\{\alpha_n(f)^2\bigr\}$.

Conversely, assume conditions \eqref{x55hgf6j} and \eqref{f56hj8m}. By \eqref{x55hgf6j} and the Vitali-Hahn-Saks theorem, $\lambda(\cdot):=\lim_nP(X_n\in\cdot)$ is a probability measure on $\mathcal{B}$ such that $\lambda(f)=\lim_nE\bigl\{f(X_n)\bigr\}$ for each $f\in M_b(S)$. Hence, condition \eqref{f56hj8m} yields
\begin{gather*}
E\Bigl\{\bigl(\alpha_n(f)-\lambda(f)\bigr)^2\Bigr\}=E\bigl\{\alpha_n(f)^2\bigr\}+\lambda(f)^2-2\,\lambda(f)\,E\bigl\{f(X_{n+1})\bigr\}\longrightarrow 0.
\end{gather*}
\end{proof}

Condition \eqref{x348uh6} is known to be true in various meaningful cases. For instance, it is true if $X$ is stationary and ergodic, or pairwise independent and identically distributed, or an irreducible positive recurrent Markov chain. More generally, condition \eqref{x348uh6} holds if $X$ satisfies the weak law of large numbers (see Remark (iii)) and $P(H)\in\{0,1\}$ for each $H$ in the $\sigma$-field generated by the random variables $\limsup_j\mu_{n_j}(f)$, where $f\in M_b(S)$ and $(n_j)$ is any subsequence. We also note that, in the special case where $X$ is pairwise independent, Theorem \ref{09ilmw3} is analogous to a characterization of convergence in probability of $X_n$ proved in \cite{EL}.

Next, to get condition \eqref{c1} via Theorem \ref{f6yyh8}, one should first identify the r.p.m. $\alpha$. In applications, this could be a shortcoming. Hence, we provide a version of Theorem \ref{f6yyh8} not involving $\alpha$ explicitly.

\begin{theorem}
Condition \eqref{c1} holds if and only if condition \eqref{x55hgf6j} holds and
\begin{gather}\label{vy78m}
\lim_nE\bigl\{f(X_n)\mid H\bigr\}\quad\text{exists,}\quad\mu_n(f)\text{ converges in probability,}
\\\text{and}\quad\lim_nE\Bigl\{\alpha_n(f)^2-\mu_n(f)^2\Bigr\}=0\quad\text{for all }f\in C_b(S)\text{ and }H\in\mathcal{A}^+.\notag
\end{gather}
\end{theorem}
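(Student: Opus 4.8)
The plan is to reduce everything to Theorem \ref{f6yyh8}: in the presence of \eqref{x55hgf6j}, I would show that condition \eqref{vy78m} is equivalent to the conjunction ``$X_n\to\alpha$ stably for some r.p.m. $\alpha$ and \eqref{mju889c4} holds'', which by Theorem \ref{f6yyh8} is in turn equivalent to \eqref{c1}. The direct implication is quick. Assume \eqref{c1}. Then \eqref{x55hgf6j} holds since \eqref{c1} $\Leftrightarrow$ \eqref{c55b9m}-\eqref{x55hgf6j}. By Theorem \ref{bgy7m} there is a $\mathcal{T}$-measurable r.p.m. $\alpha$ with $\alpha_n(f)\overset{P}\longrightarrow\alpha(f)$ for each $f\in M_b(S)$, and by Remark (iii) also $\mu_n(f)\overset{P}\longrightarrow\alpha(f)$; this gives the second clause of \eqref{vy78m}. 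Theorem \ref{f6yyh8} yields $X_n\to\alpha$ stably, so $\lim_nE\{f(X_n)\mid H\}=E\{\alpha(f)\mid H\}$ exists for every $H\in\mathcal{A}^+$, which is the first clause. Finally, since $\alpha_n(f)$ and $\mu_n(f)$ are bounded by $\norm{f}_\infty$ and both tend to $\alpha(f)$ in probability, bounded convergence gives $E\{\alpha_n(f)^2\}\to E\{\alpha(f)^2\}$ and $E\{\mu_n(f)^2\}\to E\{\alpha(f)^2\}$, whence the last clause of \eqref{vy78m} follows.

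For the converse, assume \eqref{x55hgf6j} and \eqref{vy78m}, and verify the three hypotheses of Theorem \ref{f6yyh8}. Condition \eqref{x55hgf6j} is given. Since $\mu_n(f)$ converges in probability for each $f\in C_b(S)$, Corollary 2.4 of \cite{BPRSTOC} (exactly as in the proof of Theorem \ref{bgy7m}) provides a r.p.m. $\alpha$ with $\mu_n\overset{P}\longrightarrow\alpha$; moreover, because the first finitely many summands of $\mu_n=(1/n)\sum_{i=1}^n\delta_{X_i}$ are asymptotically negligible, $\alpha(f)$ can be taken $\sigma(X)$-measurable (indeed $\mathcal{T}$-measurable). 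To establish stable convergence $X_n\to\alpha$, I would fix $f\in C_b(S)$ and $H\in\mathcal{F}_k$. Bounded convergence gives $E\{\alpha(f)\mid H\}=\lim_nE\{\mu_n(f)\mid H\}=\lim_n(1/n)\sum_{i=1}^nE\{f(X_i)\mid H\}$; since $E\{f(X_i)\mid H\}$ converges as $i\to\infty$ by the first clause of \eqref{vy78m}, its Cesàro means share the same limit, so $E\{\alpha(f)\mid H\}=\lim_nE\{f(X_n)\mid H\}$ for all $H\in\bigcup_k\mathcal{F}_k$.

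Next I would extend this identity from the generating algebra $\bigcup_k\mathcal{F}_k$ to all of $\mathcal{A}^+$. The set function $A\mapsto\lim_nE\{1_A\,f(X_n)\}$ is well defined on $\mathcal{A}$ (the limit existing by the first clause of \eqref{vy78m}) and, by the Vitali–Hahn–Saks theorem, is a finite signed measure absolutely continuous with respect to $P$; it agrees with the signed measure $A\mapsto E\{1_A\,\alpha(f)\}$ on $\bigcup_k\mathcal{F}_k$, hence on $\sigma(X)$. Since both $f(X_n)$ and $\alpha(f)$ are $\sigma(X)$-measurable, conditioning a general $H\in\mathcal{A}^+$ on $\sigma(X)$ upgrades the identity to all of $\mathcal{A}^+$, giving $X_n\to\alpha$ stably. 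It remains to check \eqref{mju889c4}: because $\mu_n(f)\overset{P}\longrightarrow\alpha(f)$ and $\abs{\mu_n(f)}\le\norm{f}_\infty$, bounded convergence gives $E\{\mu_n(f)^2\}\to E\{\alpha(f)^2\}$, and combining this with the third clause of \eqref{vy78m} yields $\lim_nE\{\alpha_n(f)^2\}=\lim_nE\{\mu_n(f)^2\}=E\{\alpha(f)^2\}$, which is \eqref{mju889c4}. Having verified \eqref{x55hgf6j}, stable convergence, and \eqref{mju889c4}, Theorem \ref{f6yyh8} delivers \eqref{c1}.

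I expect the main obstacle to be the stable-convergence step of the converse. The difficulty is that one cannot invoke $\alpha_n(f)\to\alpha(f)$ (the very conclusion being sought), so the stable limit must be identified instead through the empirical measure, via the Cesàro/martingale relation and the fact that $\mu_n\overset{P}\longrightarrow\alpha$; the delicate passage from the algebra $\bigcup_k\mathcal{F}_k$ to all of $\mathcal{A}$ then requires the Vitali–Hahn–Saks theorem together with the conditioning argument exploiting the $\sigma(X)$-measurability of $\alpha$. The remaining verifications are routine applications of bounded convergence.
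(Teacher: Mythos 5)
Your proof is correct, and while it shares the paper's overall skeleton (both directions reduce to Theorem \ref{f6yyh8}), the converse is organized around a genuinely different identification of the stable limit. The paper constructs \emph{two} r.p.m.'s: it first invokes the fact that, $(S,\mathcal{B})$ being standard Borel, the mere existence of $\lim_nE\bigl\{f(X_n)\mid H\bigr\}$ for all $f\in C_b(S)$ and $H\in\mathcal{A}^+$ yields a r.p.m. $\alpha$ with $X_n\rightarrow\alpha$ stably (a nontrivial external ingredient); it then gets the empirical limit $\mu$ from \cite[Cor. 2.4]{BPRSTOC} and proves $\alpha(f)=\mu(f)$ a.s. via the chain $E\bigl\{\alpha(f)\mid H\bigr\}=\lim_nE\bigl\{f(X_n)\mid H\bigr\}=\lim_nE\bigl\{\alpha_n(f)\mid H\bigr\}=\lim_n\frac{1}{n}\sum_{i=1}^nE\bigl\{\alpha_{i-1}(f)\mid H\bigr\}=E\bigl\{\mu(f)\mid H\bigr\}$, which rests on equation \eqref{vg874n9i} and on the martingale relation $\mu_n(f)-\frac{1}{n}\sum_{i=1}^{n}\alpha_{i-1}(f)\overset{a.s.}\longrightarrow 0$ of Remark (iii). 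You instead construct only the empirical limit and verify stable convergence to it directly, through the exact linearity identity $E\bigl\{\mu_n(f)\mid H\bigr\}=\frac{1}{n}\sum_{i=1}^nE\bigl\{f(X_i)\mid H\bigr\}$ and the Ces\`aro convergence forced by the first clause of \eqref{vy78m}. This buys self-containedness: you never need the existence theorem for stable limits, nor \eqref{vg874n9i}, nor the martingale law of large numbers; the price is nothing, since bounded convergence and Ces\`aro means do all the work. Both proofs then finish identically, checking \eqref{mju889c4} from the third clause of \eqref{vy78m} by bounded convergence.

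One simplification you missed: the step you single out as the main obstacle --- passing from $H\in\bigcup_k\mathcal{F}_k$ to $H\in\mathcal{A}^+$ via Vitali-Hahn-Saks and $\sigma(X)$-measurability --- is unnecessary. The first clause of \eqref{vy78m} is assumed for \emph{every} $H\in\mathcal{A}^+$, and nothing in your Ces\`aro argument uses $H\in\mathcal{F}_k$: bounded convergence gives $E\bigl\{\mu_n(f)\mid H\bigr\}\rightarrow E\bigl\{\alpha(f)\mid H\bigr\}$ for any $H$ with $P(H)>0$, and the Ces\`aro step only needs $\lim_iE\bigl\{f(X_i)\mid H\bigr\}$ to exist, which is exactly the hypothesis. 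So your entire extension paragraph, together with the claim that the empirical limit can be taken $\mathcal{T}$-measurable (which itself requires some care to justify), can simply be deleted: stable convergence holds at once for all $H\in\mathcal{A}^+$.
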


\begin{proof} The ``only if" part is trivial. Conversely, assume conditions \eqref{x55hgf6j} and \eqref{vy78m}. Since $(S,\mathcal{B})$ is standard Borel and $\lim_nE\bigl\{f(X_n)\mid H\bigr\}$ exists for all $f\in C_b(S)$ and $H\in\mathcal{A}^+$, there is a r.p.m. $\alpha$ on $\mathcal{B}$ such that $X_n\rightarrow\alpha$ stably. By Corollary 2.4 of \cite{BPRSTOC}, since $\mu_n(f)$ converges in probability for all $f\in C_b(S)$, there is also a r.p.m. $\mu$ on $\mathcal{B}$ such that $\mu_n(f)\overset{P}\longrightarrow\mu(f)$ for each $f\in C_b(S)$. Moreover, by \eqref{vy78m},
$$E\bigl\{\mu(f)^2\bigr\}=\lim_nE\bigl\{\mu_n(f)^2\bigr\}=\lim_nE\bigl\{\alpha_n(f)^2\bigr\}\quad\quad\text{for all }f\in C_b(S).$$
Hence, because of Theorem \ref{f6yyh8}, it suffices to see that $\alpha(f)=\mu(f)$ a.s. for each $f\in C_b(S)$. Fix $f\in C_b(S)$ and recall that $\mu_n(f)-\frac{1}{n}\,\sum_{i=1}^{n}\alpha_{i-1}(f)\overset{a.s.}\longrightarrow 0$; see Remark (iii). For all $H\in\mathcal{A}^+$, one obtains
\begin{gather*}
E\bigl\{\alpha(f)\mid H\bigr\}=\lim_nE\bigl\{f(X_n)\mid H\bigr\}=\lim_nE\bigl\{\alpha_n(f)\mid H\bigr\}
\\=\lim_n\,\frac{1}{n}\,\sum_{i=1}^{n}E\bigl\{\alpha_{i-1}(f)\mid H\bigr\}=\lim_nE\bigl\{\mu_n(f)\mid H\bigr\}=E\bigl\{\mu(f)\mid H\bigr\}.
\end{gather*}
It follows that $\alpha(f)=\mu(f)$ a.s., and this concludes the proof.
\end{proof}

To close this section, we provide a proof of Theorem \ref{v67j9x2}.

\begin{proof}[Proof of Theorem \ref{v67j9x2}]
Under condition \eqref{c55b9m}, because of \cite[Cor. 2.4]{BPRSTOC}, there is a r.p.m. $\alpha$ on $\mathcal{B}$ such that $\alpha_n(f)\overset{P}\longrightarrow\alpha(f)$ for all $f\in C_b(S)$. Arguing as in the proof of Theorem \ref{f6yyh8}, one also obtains $X_n\rightarrow\alpha$ stably. Having noted these facts, fix $f,\,g\in C_b(S)$. Then,
\begin{gather*}
E\bigl\{f(X_{n+1})\,g(X_{n+2})\bigr\}=E\bigl\{f(X_{n+1})\,E(g(X_{n+2})\mid\mathcal{F}_{n+1})\bigr\}
\\=E\bigl\{f(X_{n+1})\,\alpha_{n+1}(g)\bigr\}=E\bigl\{f(X_{n+1})\,\alpha(g)\bigr\}+E\bigl\{f(X_{n+1})\,\bigl(\alpha_{n+1}(g)-\alpha(g)\bigr)\bigr\}.
\end{gather*}
Observe now that
$$\Abs{E\bigl\{f(X_{n+1})\,\bigl(\alpha_{n+1}(g)-\alpha(g)\bigr)\bigr\}}\le\sup\abs{f}\,E\Abs{\alpha_{n+1}(g)-\alpha(g)}\longrightarrow 0.$$
Moreover, since $X_n\rightarrow\alpha$ stably and $\alpha(g)$ is bounded and $\mathcal{A}$-measurable,
$$\lim_nE\bigl\{f(X_{n+1})\,\alpha(g)\bigr\}=E\bigl\{\alpha(f)\,\alpha(g)\bigr\}.$$
Therefore,
$$\lim_nE\bigl\{f(X_{n+1})\,g(X_{n+2})\bigr\}=E\bigl\{\alpha(f)\,\alpha(g)\bigr\}.$$
Proceeding in this way, by induction, one obtains
$$\lim_nE\left\{\prod_{j=1}^kf_j(X_{n+j})\right\}=E\left\{\prod_{j=1}^k\alpha(f_j)\right\}$$
for all $k\ge 1$ and all $f_1,\ldots,f_k\in C_b(S)$. This precisely means that $(X_{n+1},X_{n+2},\ldots)$ converges in distribution, as $n\rightarrow\infty$, to the exchangeable sequence $(Z_1,Z_2,\ldots)$ whose probability distribution is given by
$$P(Z_1\in B_1,\ldots,Z_k\in B_k)=E\left\{\prod_{j=1}^k\alpha(B_j)\right\}\quad\text{for all }k\ge 1\text{ and }B_1,\ldots,B_k\in\mathcal{B}.$$
\end{proof}

\section{Weak versions of conditional identity in distribution}\label{vh89f5}
Recall that $X$ is c.i.d. if $(X_1,\ldots,X_n,X_k)\sim (X_1,\ldots,X_n,X_{n+1})$ for $k>n\ge 0$. C.i.d. sequences have been introduced in \cite{BPRCID} and \cite{KAL} and then investigated or used in various papers. See e.g. \cite{BCL,BPREJP,BPR13,BERN21,BW2025,CZGV,FHW23,FPS} and references therein.

Condition \eqref{c2} holds whenever $X$ is c.i.d. In this case, in fact, $(\alpha_n(f):n\ge 0)$ is a uniformly bounded martingale for fixed $f\in M_b(S)$. Hence, $\alpha_n(f)$ converges a.s.

In this section, we focus on three weak versions of the c.i.d. condition. In the spirit of this paper, our main goal is to investigate whether they are still sufficient for \eqref{c1} or \eqref{c2}. However, apart from \eqref{c1} and \eqref{c2}, such versions could make some interest in themselves.

\subsection{Conditional identity in distribution of higher order} Let $m$ be a non-negative integer. Say that $X$ is $m$-c.i.d. if
$$E\bigl\{f(X_k)\mid\mathcal{F}_n\bigr\}=E\bigl\{f(X_{n+m+1})\mid\mathcal{F}_n\bigr\}\quad\text{a.s.}$$
for all $k>n+m$, all $n\ge 0$, and all $f\in M_b(S)$. If $m=0$, then $X$ is c.i.d. if and only if it is $0$-c.i.d. Moreover, if $X$ is $m$-c.i.d., it is $p$-c.i.d. for all integers $p>m$.

The $m$-c.i.d. condition looks reasonable in various problems. As a remarkable special case, $X$ is $m$-c.i.d. provided it is identically distributed and $m$-dependent. (Recall that $X$ is $m$-dependent if $(X_1,\ldots,X_n)$ is independent of $(X_i:i>n+m)$ for every $n$).

The $m$-c.i.d. condition has been included in this section for it is a natural weakening of the c.i.d. condition, which looks potentially useful in applications. However, condition \eqref{c1} may fail when $X$ is $m$-c.i.d. As shown in Example \ref{f56n9m2q}, it may be that $X$ is stationary, $m$-dependent but not asymptotically exchangeable. In this case, $X$ is $m$-c.i.d. (for it is identically distributed and $m$-dependent) but condition \eqref{c1} fails (for $X$ is not asymptotically exchangeable).

\subsection{Quasi-martingale predictive distributions} Let $(Y_n)$ be a sequence of real integrable random variables adapted to a filtration $(\mathcal{G}_n)$. Then, $(Y_n)$ is a quasi-martingale with respect to $(\mathcal{G}_n)$ if
$$\sum_nE\Abs{\,E(Y_{n+1}\mid\mathcal{G}_n)-Y_n}<\infty.$$
For instance, a non-negative supermartingale is a quasi-martingale (but not necessarily a martingale). Importantly, $Y_n$ converges a.s. provided it is a quasi-martingale such that $\sup_nE\abs{Y_n}<\infty$.

It is straightforward to check that $X$ is c.i.d. if and only if the sequence $(\alpha_n(f):n\ge 0)$ is a martingale, with respect to $(\mathcal{F}_n)$, for fixed $f\in M_b(S)$. Hence, trivially, a weakening of the c.i.d. condition which still implies \eqref{c2} is
\begin{gather}\label{qmc}
(\alpha_n(f):n\ge 0)\text{ is a quasi-martingale, with respect to }(\mathcal{F}_n),
\end{gather}
for each $f\in M_b(S)$. In fact, under \eqref{qmc}, $\alpha_n(f)$ converges a.s. since $(\alpha_n(f):n\ge 0)$ is a quasi-martingale such that $E\abs{\alpha_n(f)}\le\sup\abs{f}$ for all $n$. Note also that
$$\alpha_n(f)=E\bigl\{f(X_{n+1})\mid\mathcal{F}_n\bigr\}\quad\text{and}\quad E\bigl\{\alpha_{n+1}(f)\mid\mathcal{F}_n\bigr\}=E\bigl\{f(X_{n+2})\mid\mathcal{F}_n\bigr\}\quad\text{a.s.}$$
Therefore, condition \eqref{qmc} reduces to
$$\sum_nE\Abs{\,E\bigl\{f(X_{n+2})-f(X_{n+1})\mid\mathcal{F}_n\bigr\}}<\infty\quad\text{for each }f\in M_b(S).$$

Non-c.i.d. sequences satisfying condition \eqref{qmc} arise quite frequently in applications. We close this section with a few examples.

\begin{example}\textbf{(Recursive predictive distributions)}
If $(\Omega,\mathcal{A})=(S,\mathcal{B})$, a r.p.m. on $\mathcal{B}$ is said to be a {\em kernel} on $(S,\mathcal{B})$. For each $n\ge 0$, suppose
\begin{gather}\label{s45yh8m9}
\alpha_{n+1}(\cdot)=q_n(X_1,\ldots,X_n)\,\alpha_n(\cdot)\,+\,(1-q_n(X_1,\ldots,X_n))\,K_n(X_{n+1},\,\cdot)
\end{gather}
where $q_n:S^n\rightarrow (0,1)$ is a Borel function and $K_n$ a kernel on $(S,\mathcal{B})$. This type of predictive distributions have an intuitive interpretation and are quite popular in Bayesian predictive inference and species sampling; see e.g. \cite{BCL,BERN21,PIT}. Since
\begin{gather*}
E\Abs{\,E\bigl\{f(X_{n+2})-f(X_{n+1})\mid\mathcal{F}_n\bigr\}}\le E\bigl\{\abs{\alpha_{n+1}(f)-\alpha_n(f)}\bigr\}\le 2\,\sup\abs{f}\,E\Bigl\{1-q_n(X_1,\ldots,X_n)\Bigr\},
\end{gather*}
condition \eqref{qmc} trivially holds whenever
$$\sum_n\sup_{x\in S^n}\,\bigl(1-q_n(x)\bigr)<\infty.$$
Moreover, since the $K_n$ are still arbitrary, they can be chosen so that $X$ is not c.i.d.
\end{example}

\begin{example}\textbf{(Convex combinations of kernels)}
For each $n\ge 0$, choose a constant $d_n>0$ and a kernel $K_n$. Define $D_n=\sum_{i=0}^{n-1}d_i$ and
$$\alpha_n(\cdot)=(1/D_n)\,\sum_{i=0}^{n-1}d_i\,K_i(X_{i+1},\,\cdot)\quad\quad\text{where }n\ge 1.$$
Since \eqref{s45yh8m9} holds with $q_n=D_n/D_{n+1}$, a sufficient condition for \eqref{qmc} is
$$\sum_n\frac{d_n}{d_0+\ldots+d_n}<\infty.$$
\end{example}

\begin{example}\textbf{(Generalized P\'olya urns)}
An urn contains $b>0$ black balls and $r>0$ red balls. At each time $n\ge 1$, a ball is drawn and then replaced together with a random number of balls of the same color. Say that $B_n$ black balls or $R_n$ red balls are added to the urn according to whether $Y_n=1$ or $Y_n=0$, where $Y_n$ is the indicator of the event $\bigl\{$black ball at time $n\bigr\}$. To model this urn scheme, it is quite natural to let
$$P(Y_{n+1}=1\mid B_1,R_1,Y_1,\ldots,B_n,R_n,Y_n)=\frac{b+\sum_{i=1}^nB_iY_i}{b+r+\sum_{i=1}^n\bigl(B_iY_i+R_i(1-Y_i)\bigr)}.$$
Urns of this type have an history; see \cite{BCL,BPRCID,BCPR11,MF,PEM} and references therein. Now, define $X_n=(B_n,R_n,Y_n)$ and suppose:
\begin{itemize}

\item $(B_{n+1},R_{n+1})$ is independent of $(X_1,\ldots,X_n,Y_{n+1})$;

\item $0\le B_n,\,R_n\le b$ for some $b>0$ and $E(B_n)=E(R_n)$;

\item $\lim_nE(B_n)$, $\lim_nE(B_n^2)$, $\lim_nE(R_n^2)$ exist and $\lim_nE(B_n)>0$.

\end{itemize}
Then, as shown in \cite[Sect. 4.3]{BCPR11}, $P(Y_{n+1}=1\mid\mathcal{F}_n)$ is a quasi-martingale, so that it converges a.s. Thus, since $(B_{n+1},R_{n+1})$ is independent of $(X_1,\ldots,X_n,Y_{n+1})$, condition \eqref{c1} holds whenever $E\bigl\{f(B_n,R_n)\bigr\}$ approaches a limit for every bounded Borel $f$. This trivially happens, for instance,  if the sequence $(B_n,R_n)$ is identically distributed. However, $X$ is not necessarily c.i.d.
\end{example}

\subsection{Non-adapted c.i.d. sequences} Let $(\mathcal{G}_n)$ be a filtration on $(\Omega,\mathcal{A},P)$ satisfying $\mathcal{G}_n\subset\mathcal{F}_n$ for each $n\ge 0$. Define the r.p.m.'s
$$\beta_n(\cdot)=P(X_{n+1}\in\cdot\mid\mathcal{G}_n)$$
and note that $\beta_n(f)=E\bigl\{f(X_{n+1})\mid\mathcal{G}_n\bigr\}$ a.s. for all $f\in M_b(S)$.

In this section, we assume that
\begin{gather}\label{b65s2w}
E\bigl\{f(X_k)\mid\mathcal{G}_n\bigr\}=E\bigl\{f(X_{n+1})\mid\mathcal{G}_n\bigr\}\quad\text{a.s.}
\end{gather}
for all $k>n\ge 0$ and all $f\in M_b(S)$. Then, $X$ is identically distributed (since $\mathcal{G}_0=\{\emptyset,\Omega\}$) and $X$ is c.i.d. whenever $\mathcal{G}_n=\mathcal{F}_n$ for all $n$. More importantly, for fixed $f\in M_b(S)$, the sequence $(\beta_n(f):n\ge 0)$ is a uniformly bounded martingale with respect to $(\mathcal{G}_n)$. Hence, $\beta_n(f)$ converges a.s. In addition,
$$\beta_n(f)=E\bigl\{f(X_{n+1})\mid\mathcal{G}_n\bigr\}=E\Bigl\{E\bigl\{f(X_{n+1})\mid\mathcal{F}_n\bigr\}\mid\mathcal{G}_n\Bigr\}=E\bigl\{\alpha_n(f)\mid\mathcal{G}_n\bigr\}\text{ a.s.}$$
Therefore, a (natural) question is whether condition \eqref{c1} holds whenever $\mathcal{G}_n$ is close to $\mathcal{F}_n$ for large $n$. The next result is a possible answer to this question.

\begin{theorem}\label{g65cx3}
Suppose condition \eqref{b65s2w} holds and define $\mathcal{F}_\infty=\sigma\bigl(\cup_n\mathcal{F}_n\bigr)$ and $\mathcal{G}_\infty=\sigma\bigl(\cup_n\mathcal{G}_n\bigr)$. Then, a sufficient condition for \eqref{c1} is
\begin{gather}\label{ioc7}
\lim_nE\bigl\{\alpha_n(f)^2\bigr\}=\lim_nE\bigl\{\beta_n(f)^2\bigr\}\quad\quad\text{for each }f\in M_b(S).
\end{gather}
If $\mathcal{G}_\infty=\mathcal{F}_\infty$, condition \eqref{ioc7} is necessary as well.
\end{theorem}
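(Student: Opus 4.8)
The plan is to work entirely at the level of the $L^2$-geometry of the predictive distributions, exploiting the martingale structure of $(\beta_n(f))$ and the identity $\beta_n(f)=E\{\alpha_n(f)\mid\mathcal{G}_n\}$ already recorded in this section. The starting observation is that, since $\beta_n(f)$ is $\mathcal{G}_n$-measurable and equals the $\mathcal{G}_n$-conditional expectation of $\alpha_n(f)$, the residual $\alpha_n(f)-\beta_n(f)$ is orthogonal in $L^2$ to $\beta_n(f)$, whence the Pythagorean identity
\begin{gather*}
E\{\alpha_n(f)^2\}=E\{(\alpha_n(f)-\beta_n(f))^2\}+E\{\beta_n(f)^2\}\qquad\text{for each }f\in M_b(S)\text{ and }n\ge 0.
\end{gather*}
Moreover, being a uniformly bounded $(\mathcal{G}_n)$-martingale, $(\beta_n(f))$ converges a.s. and in $L^2$ to some $\mathcal{G}_\infty$-measurable limit $\beta_\infty(f)$, so that $E\{\beta_n(f)^2\}\to E\{\beta_\infty(f)^2\}$.

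For the sufficiency I would read condition \eqref{ioc7} as asserting that $\lim_n E\{\alpha_n(f)^2\}$ exists and equals $\lim_n E\{\beta_n(f)^2\}$; substituting into the displayed identity forces $E\{(\alpha_n(f)-\beta_n(f))^2\}\to 0$, i.e. $\alpha_n(f)-\beta_n(f)\to 0$ in $L^2$. Together with $\beta_n(f)\to\beta_\infty(f)$ in $L^2$ this yields $\alpha_n(f)\to\beta_\infty(f)$ in $L^2$, hence in probability, for every $f\in M_b(S)$. That is exactly \eqref{c1}, and it is obtained directly, without appealing to the stable-convergence characterization of Theorem \ref{f6yyh8}.

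For the necessity (under $\mathcal{G}_\infty=\mathcal{F}_\infty$) I would start from \eqref{c1} and Theorem \ref{bgy7m}, which give a $\mathcal{T}$-measurable r.p.m. $\alpha$ with $\alpha_n(f)\to\alpha(f)$ in probability, hence in $L^2$ by boundedness, so $E\{\alpha_n(f)^2\}\to E\{\alpha(f)^2\}$. Since also $E\{\beta_n(f)^2\}\to E\{\beta_\infty(f)^2\}$, condition \eqref{ioc7} reduces to the single identity $E\{\alpha(f)^2\}=E\{\beta_\infty(f)^2\}$, which I would establish by proving $\beta_\infty(f)=\alpha(f)$ a.s. The route is to identify $\beta_\infty(f)=E\{\alpha(f)\mid\mathcal{G}_\infty\}$: decomposing $E\{\alpha_n(f)\mid\mathcal{G}_n\}-E\{\alpha(f)\mid\mathcal{G}_\infty\}$ into $E\{\alpha_n(f)-\alpha(f)\mid\mathcal{G}_n\}$ plus $E\{\alpha(f)\mid\mathcal{G}_n\}-E\{\alpha(f)\mid\mathcal{G}_\infty\}$, the first summand is controlled by the conditional contraction of the $L^1$-error $\alpha_n(f)\to\alpha(f)$ and the second vanishes by Lévy's upward theorem. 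Because $\mathcal{T}\subset\mathcal{F}_\infty=\mathcal{G}_\infty$, the variable $\alpha(f)$ is $\mathcal{G}_\infty$-measurable, so $E\{\alpha(f)\mid\mathcal{G}_\infty\}=\alpha(f)$ and thus $\beta_\infty(f)=\alpha(f)$ a.s., which gives \eqref{ioc7}.

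I expect the only genuinely delicate step to be the identification $\beta_\infty(f)=E\{\alpha(f)\mid\mathcal{G}_\infty\}$ in the necessity part: this is precisely where the hypothesis $\mathcal{G}_\infty=\mathcal{F}_\infty$ is indispensable, and where one must combine two distinct limiting mechanisms, namely the $L^1$-convergence $\alpha_n(f)\to\alpha(f)$ and the convergence of the conditioning $\sigma$-fields $\mathcal{G}_n\uparrow\mathcal{G}_\infty$. Everything else — the orthogonal decomposition, the reduction of \eqref{ioc7} to an $L^2$-distance, and the passage from $L^2$ to convergence in probability — is routine Hilbert-space and martingale bookkeeping.
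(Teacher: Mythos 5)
Your proposal is correct and follows essentially the same route as the paper: your Pythagorean identity is exactly the paper's cross-term computation $E\bigl\{(\alpha_n(f)-\beta_n(f))^2\bigr\}=E\bigl\{\alpha_n(f)^2\bigr\}-E\bigl\{\beta_n(f)^2\bigr\}$ for sufficiency, and the necessity argument has the same skeleton (Theorem \ref{bgy7m} gives a $\mathcal{T}$-measurable $\alpha$, then $\mathcal{T}\subset\mathcal{F}_\infty=\mathcal{G}_\infty$ forces $E\bigl\{\alpha(f)\mid\mathcal{G}_\infty\bigr\}=\alpha(f)$ a.s.). The one point of divergence is the step you flag as delicate: the paper disposes of the double limit $\beta_n(f)=E\bigl\{\alpha_n(f)\mid\mathcal{G}_n\bigr\}\overset{P}\longrightarrow E\bigl\{\alpha(f)\mid\mathcal{G}_\infty\bigr\}$ by citing Theorem 2 of Blackwell--Dubins \cite{BLAKDUB}, whereas you prove it by hand via the decomposition into $E\bigl\{\alpha_n(f)-\alpha(f)\mid\mathcal{G}_n\bigr\}$ (killed by the conditional $L^1$-contraction, since boundedness upgrades convergence in probability to $L^1$) plus $E\bigl\{\alpha(f)\mid\mathcal{G}_n\bigr\}-E\bigl\{\alpha(f)\mid\mathcal{G}_\infty\bigr\}$ (killed by L\'evy's upward theorem); this is a valid, self-contained substitute for the citation, at the cost of a few extra lines.
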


\begin{proof}
Fix $f\in M_b(S)$. Under \eqref{ioc7},
\begin{gather*}
E\Bigl\{\bigl(\alpha_n(f)-\beta_n(f)\bigr)^2\Bigr\}=E\bigl\{\alpha_n(f)^2\bigr\}+E\bigl\{\beta_n(f)^2\bigr\}-2\,E\Bigl\{\beta_n(f)\,E\bigl\{\alpha_n(f)\mid\mathcal{G}_n\bigr\}\Bigr\}
\\=E\bigl\{\alpha_n(f)^2\bigr\}-E\bigl\{\beta_n(f)^2\bigr\}\longrightarrow 0.
\end{gather*}
Hence, $\alpha_n(f)$ converges in probability since $\alpha_n(f)-\beta_n(f)\overset{P}\longrightarrow 0$ and $\beta_n(f)$ converges a.s. (because of \eqref{b65s2w}). Next, suppose $\mathcal{G}_\infty=\mathcal{F}_\infty$ and condition \eqref{c1} holds. By Theorem \ref{bgy7m}, $\alpha_n(f)\overset{P}\longrightarrow\alpha(f)$ where the r.p.m. $\alpha$ is $\mathcal{T}$-measurable. By the martingale convergence theorem, $\beta_n(f)=E\bigl\{\alpha_n(f)\mid\mathcal{G}_n\bigr\}\overset{P}\longrightarrow E\bigl\{\alpha(f)\mid\mathcal{G}_\infty\bigr\}$; see e.g. Theorem 2 of \cite{BLAKDUB}. Since $\mathcal{T}\subset\mathcal{F}_\infty=\mathcal{G}_\infty$ and $\alpha$ is $\mathcal{T}$-measurable, then $E\bigl\{\alpha(f)\mid\mathcal{G}_\infty\bigr\}=\alpha(f)$ a.s. Hence, both $\alpha_n(f)$ and $\beta_n(f)$ converge in probability to $\alpha(f)$, so that $\lim_nE\bigl\{\alpha_n(f)^2\bigr\}=E\bigl\{\alpha(f)^2\bigr\}=\lim_nE\bigl\{\beta_n(f)^2\bigr\}$.
\end{proof}

In general, under \eqref{b65s2w}, condition \eqref{c1} could be expected to be true if $\mathcal{G}_n$ is close to $\mathcal{F}_n$ for large $n$. However, $\mathcal{G}_\infty=\mathcal{F}_\infty$ is not enough. Indeed, as shown by Example \ref{r45h7jb4}, condition \eqref{c1} may fail even if condition \eqref{b65s2w} holds and $\mathcal{G}_\infty=\mathcal{F}_\infty$.

Finally, apart from \eqref{c1}, condition \eqref{b65s2w} helps to get the weak law of large numbers.

\begin{theorem}\label{vc67m91q}
Let $g:[0,\infty)\rightarrow [0,\infty)$ be an increasing function such that $g(x)\le x$ for all $x\ge 0$ and $\lim_nn^{-2}\,\int_1^ng(x)\,dx=0$. Assume condition \eqref{b65s2w} and
$$\mathcal{F}_{n-[g(n)]}\subset\mathcal{G}_n\quad\quad\text{for each }n\ge 0$$
where $[\cdot]$ denotes the integer part. Then, $(1/n)\,\sum_{i=1}^{n}f(X_i)$ converges in probability for each $f\in M_b(S)$.
\end{theorem}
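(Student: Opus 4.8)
The plan is to compare the empirical mean to the Cesàro averages of the martingale $(\beta_n(f))$. Fix $f\in M_b(S)$ and put $Y_i=f(X_i)-\beta_{i-1}(f)$ for $i\ge 1$, where $\beta_{i-1}(f)=E\bigl\{f(X_i)\mid\mathcal{G}_{i-1}\bigr\}$. Then $\abs{Y_i}\le 2\sup\abs{f}$ and $E\bigl\{Y_i\mid\mathcal{G}_{i-1}\bigr\}=0$, so the $Y_i$ are conditionally centered given $\mathcal{G}_{i-1}$ (though not $\mathcal{G}_i$-adapted, which is exactly the delicate point). First I would estimate the second moment of $\sum_{i=1}^n Y_i$. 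The diagonal terms contribute at most $4\sup\abs{f}^2\,n$.

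For the off-diagonal terms with $i<j$ I would condition on $\mathcal{G}_{j-1}$: since $E\bigl\{Y_j\mid\mathcal{G}_{j-1}\bigr\}=0$, the covariance $E\{Y_iY_j\}$ vanishes as soon as $Y_i$ is $\mathcal{G}_{j-1}$-measurable. Now $Y_i$ is $\mathcal{F}_i$-measurable (because $\beta_{i-1}(f)$ is $\mathcal{G}_{i-1}\subset\mathcal{F}_{i-1}$-measurable), and the hypothesis $\mathcal{F}_{(j-1)-[g(j-1)]}\subset\mathcal{G}_{j-1}$ shows that this holds whenever $i\le (j-1)-[g(j-1)]$. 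Hence only the near-diagonal pairs with $(j-1)-[g(j-1)]<i<j$ survive; for each $j$ there are at most $[g(j-1)]$ of them, each bounded by $4\sup\abs{f}^2$ via Cauchy--Schwarz. Summing, using that $g$ is increasing so that $\sum_{m=1}^{n-1}g(m)\le\int_1^n g(x)\,dx$, I obtain
\[
E\Bigl[\bigl(\textstyle\sum_{i=1}^n Y_i\bigr)^2\Bigr]\le 4\sup\abs{f}^2\,n+8\sup\abs{f}^2\int_1^n g(x)\,dx.
\]
Dividing by $n^2$ and invoking $\lim_n n^{-2}\int_1^n g(x)\,dx=0$ gives $(1/n)\sum_{i=1}^n Y_i\overset{P}\longrightarrow 0$, that is, $(1/n)\sum_{i=1}^n f(X_i)$ and $(1/n)\sum_{i=1}^n\beta_{i-1}(f)$ have the same probabilistic limit, if any.

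It then remains to treat the averages of $\beta_{i-1}(f)$. As already recorded in this section, condition \eqref{b65s2w} makes $(\beta_n(f):n\ge 0)$ a uniformly bounded $(\mathcal{G}_n)$-martingale, so $\beta_n(f)$ converges a.s. to some $\beta_\infty(f)$; a standard Cesàro argument then yields $(1/n)\sum_{i=1}^n\beta_{i-1}(f)\overset{a.s.}\longrightarrow\beta_\infty(f)$. Combining the two pieces gives $(1/n)\sum_{i=1}^n f(X_i)\overset{P}\longrightarrow\beta_\infty(f)$, as required.

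The main obstacle---and the only place the precise assumptions on $g$ are used---is the covariance bookkeeping: pinning down exactly which off-diagonal covariances vanish (via the inclusion $\mathcal{F}_i\subset\mathcal{G}_{j-1}$) and checking that the number of surviving near-diagonal pairs, summed over $j$, is controlled by $\int_1^n g$, hence is $o(n^2)$. Everything else is routine; I would also note in passing that $g(x)\le x$ is what guarantees $n-[g(n)]\ge 0$, so the index on $\mathcal{F}_{n-[g(n)]}$ is meaningful.
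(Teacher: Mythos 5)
Your proposal is correct and follows essentially the same route as the paper: an $L^2$ bound on $\frac{1}{n}\sum_{i=1}^n\bigl(f(X_i)-\beta_{i-1}(f)\bigr)$, with off-diagonal covariances killed by conditioning on $\mathcal{G}_{j-1}$ whenever $\mathcal{F}_i\subset\mathcal{G}_{j-1}$, the surviving near-diagonal pairs counted by $[g(j-1)]$ and controlled via $\sum_{j}g(j-1)\le\int_1^n g(x)\,dx$, and the martingale convergence of $\beta_n(f)$ plus a Ces\`aro argument handling the remaining term. The only cosmetic difference is that you track the explicit constant $8\bigl(\sup\abs{f}\bigr)^2$ and spell out the Ces\`aro step, which the paper leaves implicit.
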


\begin{proof}
Let $f\in M_b(S)$. Since $\beta_n(f)$ converges a.s., it suffices to show that
$$\lim_n\frac{1}{n^2}\,E\Bigl\{\Bigl(\sum_{i=1}^n\bigl(f(X_i)-\beta_{i-1}(f)\bigr)\Bigr)^2\Bigr\}=0$$
or equivalently
$$\lim_n\frac{1}{n^2}\,\sum_{1\le i<j\le n}E\Bigl\{\bigl(f(X_i)-\beta_{i-1}(f)\bigr)\bigl(f(X_j)-\beta_{j-1}(f)\bigr)\Bigr\}=0.$$
For fixed $j$, since $\mathcal{F}_{j-1-[g(j-1)]}\subset\mathcal{G}_{j-1}$, one obtains
$$E\Bigl\{\bigl(f(X_i)-\beta_{i-1}(f)\bigr)\bigl(f(X_j)-\beta_{j-1}(f)\bigr)\Bigr\}=E\Bigl\{\bigl(f(X_i)-\beta_{i-1}(f)\bigr)\,E\Bigl(f(X_j)-\beta_{j-1}(f)\mid\mathcal{G}_{j-1}\Bigr)\Bigr\}=0$$
whenever $i<j-[g(j-1)]$. Therefore, it suffices noting that
\begin{gather*}\frac{1}{n^2}\,\sum_{j=2}^n\,\,\sum_{i=j-[g(j-1)]}^{j-1}E\Abs{\bigl(f(X_i)-\beta_{i-1}(f)\bigr)\bigl(f(X_j)-\beta_{j-1}(f)\bigr)}\le\frac{4\,\bigl(\sup\abs{f}\bigr)^2}{n^2}\,\sum_{j=2}^ng(j-1)
\\\le\frac{4\,\bigl(\sup\abs{f}\bigr)^2}{n^2}\,\int_1^ng(x)\,dx\longrightarrow 0.
\end{gather*}
\end{proof}

\section{Counterexamples}
All the counterexamples are collected in this final section.

\begin{example}\label{v778m4w}\textbf{$\bigl($Condition \eqref{c1} $\nRightarrow$ condition \eqref{c2}$\bigr)$}
The following example exhibits a situation where, not only \eqref{c1} holds and \eqref{c2} fails, but $X$ is also pairwise independent. Simpler examples are available if pairwise independence is not required. Suppose $(\Omega,\mathcal{A},P)$ is non-atomic and take the events $A_n,B_n,C_n\in\mathcal{A}$ such that:
\begin{itemize}

\item The sequence of triples $\bigl\{(A_n,B_n,C_n):n\ge 1\bigr\}$ is independent;

\item $A_n$ and $B_n$ are independent and $C_n=(A_n\cap B_n)\cup G_n$ with $G_n\subset A_n^c\cap B_n^c$;

\item $P(A_n)=P(B_n)=d_n$ and $P(G_n)=d_n-d_n^2$ where $0<d_n<1/2$.

\end{itemize}
Next, for each $n\ge 1$, define $X_{3n-2}=1_{A_n}$, $X_{3n-1}=1_{B_n}$ and $X_{3n}=1_{C_n}$. It is straightforward to check that each triple $(A_n,B_n,C_n)$ is pairwise independent. Hence, $X$ is pairwise independent. Since $X_{3n-2}$ is independent of $\mathcal{F}_{3n-3}$ and $X_{3n-1}$ is independent of $\mathcal{F}_{3n-2}$, one obtains
$$\alpha_{3n-3}(f)=\alpha_{3n-2}(f)=f(0)+(f(1)-f(0))\,d_n\quad\quad\text{a.s.}$$
Similarly,
\begin{gather*}
\alpha_{3n-1}(f)=E\bigl\{f(X_{3n})\mid X_{3n-2},X_{3n-1}\bigr\}
\\=f(0)+(f(1)-f(0))\,\Bigl\{X_{3n-2}X_{3n-1}+\frac{d_n}{1-d_n}\,(1-X_{3n-2})(1-X_{3n-1})\Bigr\}\quad\text{a.s.}
\end{gather*}
Thus, condition \eqref{c1} holds and condition \eqref{c2} fails whenever $d_n\rightarrow 0$ and $\sum_nd_n^2=\infty$. In fact, $\alpha_n(f)\overset{P}\longrightarrow f(0)$ since $d_n\rightarrow 0$. If $f(0)\neq f(1)$, however, $\alpha_{3n-1}(f)$ does not converge to $f(0)$ a.s., since the Borel-Cantelli lemma implies
$$P(X_{3n-2}=X_{3n-1}=1\text{ for infinitely many }n)=1.$$
\end{example}

\begin{example}\label{w34b7j}\textbf{$\bigl($Asymptotic exchangeability $\nRightarrow$ condition \eqref{c55b9m}$\bigr)$}
Let $(Y_n,Z_n)$ be an independent sequence of bivariate random variables. Suppose that $(Y_n,Z_n)$ has a density $f_n$, with respect to Lebesgue measure on $(0,1)^2$, where
$$f_n(y,z)=1+\sin(2\,\pi\,n\,y)\,\cos(2\,\pi\,z)\quad\text{ for all }(y,z)\in (0,1)^2.$$
Define $X_1=0$, $X_{2n}=Y_n$ and $X_{2n+1}=Z_n$ for all $n\ge 1$. Since the pairs $(Y_n,Z_n)$ are independent and $Y_n$ is uniformly distributed on $(0,1)$, letting $f(z)=\cos (2\,\pi\,z)$, one obtains
\begin{gather*}
\alpha_{2n-1}(f)=E\bigl\{f(Y_n)\mid Y_1,Z_1,\ldots,Y_{n-1},Z_{n-1}\bigr\}=E\bigl\{f(Y_n)\bigr\}=\int_0^1 \cos(2\,\pi\,z)\,dz=0
\end{gather*}
a.s. However,
\begin{gather*}
\alpha_{2n}(f)=E\bigl\{f(Z_n)\mid Y_1,Z_1,\ldots,Y_{n-1},Z_{n-1},Y_n\bigr\}
\\=E\bigl\{f(Z_n)\mid Y_n\bigr\}=\int_0^1 \cos(2\,\pi\,z)\,f_n(Y_n,z)\,dz=\frac{\sin(2\,\pi\,n\,Y_n)}{2}\quad\text{a.s.}
\end{gather*}
Hence, since $f\in C_b(S)$ and $\alpha_n(f)$ does not converge in probability, condition \eqref{c55b9m} fails. It remains to show that $X$ is asymptotically exchangeable. First note that
\begin{gather*}
\lim_nE\bigl\{g(Y_n)\,h(Z_n)\bigr\}=\lim_n\int_0^1\int_0^1g(y)\,h(z)\,f_n(y,z)\,dy\,dz
\\=\int_0^1g(y)\,dy\,\int_0^1h(z)\,dz\,+\,\lim_n\,\int_0^1h(z)\,cos (2\,\pi\,z)\,dz\,\int_0^1 g(y)\,\sin(2\,\pi\,n\,y)\,dy
\\=\int_0^1g(y)\,dy\,\int_0^1h(z)\,dz\quad\quad\text{for all }g,\,h\in C_b(S).
\end{gather*}
Based on this fact, for all $k\ge 1$ and $g_1,\ldots,g_k\in C_b(S)$, one obtains
\begin{gather*}
\lim_nE\left\{\prod_{i=1}^kg_i(X_{n+i})\right\}=\prod_{i=1}^k\int_0^1 g_i(t)\,dt.
\end{gather*}
This means that $(X_{n+1},X_{n+2},\ldots)\rightarrow (Z_1,Z_2,\ldots)$ in distribution, as $n\rightarrow\infty$, where $(Z_1,Z_2,\ldots)$ is i.i.d. with $Z_1$ uniformly distributed on $(0,1)$. Therefore, $X$ is asymptotically exchangeable (it is even asymptotically i.i.d.).
\end{example}

\begin{example}\label{f56n9m2q}\textbf{$\bigl($Stationary and $m$-dependent $\nRightarrow$ asymptotically exchangeable$\bigr)$}
Let $X_n=Y_n-Y_{n+m}$ where $m\ge 1$ and $(Y_n)$ is an i.i.d. sequence of real non-degenerate random variables. Then, $X$ is stationary and $m$-dependent. Let $\mathcal{T}=\bigcap_n\sigma(X_n,X_{n+1},\ldots)$. Since $\mathcal{T}\subset\bigcap_n\sigma(Y_n,Y_{n+1},\ldots)$ and $(Y_n)$ is i.i.d., then $P(A)\in\{0,1\}$ for each $A\in\mathcal{T}$. Toward a contradiction, suppose now that $X$ is asymptotically exchangeable. Since $X$ is stationary, this would imply exchangeability of $X$. In turn, exchangeability of $X$ and $P(A)\in\{0,1\}$ for each $A\in\mathcal{T}$, would imply that $X$ is i.i.d. But $X$ is not i.i.d. Hence, $X$ is not asymptotically exchangeable.
\end{example}

\begin{example}\label{r45h7jb4}\textbf{$\bigl($Condition \eqref{b65s2w} and $\mathcal{G}_\infty=\mathcal{F}_\infty$ $\nRightarrow$ condition \eqref{c1}$\bigr)$}
Take $X$ as in Example \ref{f56n9m2q} and define $\mathcal{G}_n=\bigl\{\emptyset,\Omega\bigr\}$ for $n\le m$ and $\mathcal{G}_n=\mathcal{F}_{n-m}$ for $n>m$. Then, $\mathcal{G}_\infty=\mathcal{F}_\infty$ and condition \eqref{c1} fails (due to $X$ is not asymptotically exchangeable). Moreover, since $X$ is $m$-dependent, $E\bigl\{f(X_k)\mid\mathcal{G}_n\bigr\}=E\bigl\{f(X_1)\bigr\}$ a.s. for all $k>n\ge 0$ and $f\in M_b(S)$. Thus, condition \eqref{b65s2w} holds.
\end{example}

To introduce our last example, recall that $E\bigl\{f(X_{n+2})\mid\mathcal{F}_n\bigr\}=E\bigl\{\alpha_{n+1}(f)\mid\mathcal{F}_n\bigr\}$ a.s. Hence, a necessary condition for \eqref{c1} is
\begin{gather}\label{vg7uj9}
E\bigl\{f(X_{n+2})-f(X_{n+1})\mid\mathcal{F}_n\bigr\}\overset{P}\longrightarrow 0\quad\quad\text{for each }f\in M_b(S).
\end{gather}
However, \eqref{vg7uj9} does not suffice for \eqref{c1} even if various other conditions are satisfied. Interestingly, this may happen even in the classical CLT.

\begin{example}\textbf{$\bigl($Condition \eqref{c1} may fail even if condition \eqref{vg7uj9} holds, $X_n$ converges stably, and $X$ is asymptotically exchangeable$\bigr)$} Let $W,Z_1,Z_2,\ldots$ be real random variables on $(\Omega,\mathcal{A},P)$. Suppose $W\sim N(0,1)$ and the $Z_n$ are i.i.d. with $E(Z_1)=0$ and $E(Z_1^2)=1$. Then, it is well known that
$$X_n:=n^{-1/2}\sum_{i=1}^nZ_i\longrightarrow N(0,1)\quad\quad\text{stably.}$$
Next, for any $k\ge 1$, the vector $(X_{n+1},\ldots,X_{n+k})$ can be written as
$$\left(X_{n+1},\,\sqrt{\frac{n+1}{n+2}}\,X_{n+1},\dots,\sqrt{\frac{n+1}{n+k}}X_{n+1}\right)\,+\,\left(0,\,\frac{Z_{n+2}}{\sqrt{n+2}},\ldots,\frac{\sum_{i=n+2}^{n+k}Z_i}{\sqrt{n+k}}\right).$$
Hence, $(X_{n+1},\ldots,X_{n+k})\rightarrow (W,\ldots,W)$ in distribution for every $k\ge 1$, or equivalently $(X_{n+1},X_{n+2},\ldots)\longrightarrow (W,W,\ldots)$ in distribution. Thus, $X$ is asymptotically exchangeable. Next, to prove \eqref{vg7uj9}, we also assume that the probability distribution of $Z_1$ is absolutely continuous with respect to Lebesgue measure, so that
$$\sup_{A\in\mathcal{B}}\,\abs{P(X_n\in A)-P(W\in A)}\rightarrow 0.$$
We also note that $X_{n+2}-X_{n+1}\overset{a.s.}\longrightarrow 0$, and thus $g(X_{n+2})-g(X_{n+1})\overset{a.s.}\longrightarrow 0$ for each Lipschitz function $g$. Fix $f\in M_b(S)$ and $\epsilon>0$. Since $X_n$ converges in total variation, there is a bounded Lipschitz function $g$ such that
$E\abs{f(X_n)-g(X_n)}<\epsilon$ for large $n$. Therefore,
\begin{gather*}
E\abs{f(X_{n+2})-f(X_{n+1})}\le E\abs{f(X_{n+2})-g(X_{n+2})}+E\abs{g(X_{n+2})-g(X_{n+1})}\,+
\\+\,E\abs{g(X_{n+1})-f(X_{n+1})}< 3\,\epsilon\quad\quad\text{for large }n.
\end{gather*}
This proves $\lim_n E\abs{f(X_{n+2})-f(X_{n+1})}=0$, which in turn implies condition \eqref{vg7uj9}. Finally, we prove that condition \eqref{c1} fails. Since $X_n$ does not converge in probability, there is a bounded Lipschitz function $g$ such that $g(X_n)$ does not converge in probability. Let $\nu$ denote the probability distribution of $Z_1$. Since
$$\alpha_n(g)=E\bigl\{g(X_{n+1})\mid\mathcal{F}_n\bigr\}=\int g\left(\frac{\sqrt{n}}{\sqrt{n+1}}\,X_n+\frac{x}{\sqrt{n+1}}\right)\,\nu(dx)\quad\text{a.s.,}$$
one obtains
\begin{gather*}\Abs{\alpha_n(g)-g(X_n)}\le\int\Abs{g\left(\frac{\sqrt{n}}{\sqrt{n+1}}\,X_n+\frac{x}{\sqrt{n+1}}\right)-g(X_n)}\,\nu(dx)\le c\,\,\frac{\abs{X_n}+E\abs{Z_1}}{\sqrt{n+1}}\overset{P}\longrightarrow 0
\end{gather*}
where $c$ is the Lipschitz constant of $g$. Hence, condition \eqref{c1} fails. In fact, $\alpha_n(g)$ does not converge in probability since $g(X_n)$ does not converge in probability.
\end{example}

\end{document}